\def\uwave{\bgroup \markoverwith{\lower3.5\p@\hbox{\sixly \textcolor{red}{\char58}}}\ULon}
\font\sixly=lasy6 
\newenvironment{red}{\relax\color{red}}{\relax}
\newenvironment{blue}{\relax\color{blue}}{\hspace*{.5ex}\relax}
\newcommand{\ber}{\begin{red}}
	\newcommand{\er}{\end{red}}
\newcommand{\beb}{\begin{blue}}
	\newcommand{\eb}{\end{blue}}
\newcounter{sarrow}
\newtheorem{theo}{Theorem}[section]
\newtheorem{lemma}{Lemma}[section]
\newtheorem{prop}[lemma]{Proposition}
\newtheorem{coro}[lemma]{Corollary}
\newtheorem{lemm}[lemma]{Lemma}
\theoremstyle{definition}
\newtheorem{rema}[lemma]{Remark}
\newtheorem{defi}[lemma]{Definition}
\newtheorem{thm-Intro}{Theorem} 
\newtheorem{cor-Intro}{Corollary} 
\numberwithin{equation}{section}
\newcommand{\abs}[1]{\left|#1\right|} 
\newcommand{\Hess}{\operatorname{Hess}}
\newcommand{\twobar}{/\kern-0.5em/}
\newcommand{\threebar}{/\kern-0.5em/\kern-0.5em/}
\title[Stochastic Differential geometry and fundamental gap]{Probabilistic Method to Fundamental gap problems 
on the sphere}
\author{Gunhee Cho}
\address{Department of Mathematics\\
	University of California, Santa Barbara\\
	Santa Barbara, CA 93106.}
\email{gunhee.cho@math.ucsb.edu}
\thanks{GC was partially supported by AMS Simons travel grant}
\author{Guofang Wei}
\address{Department of Mathematics\\
	University of California, Santa Barbara\\
	Santa Barbara, CA 93106.}
\email{wei@math.ucsb.edu}
\thanks{ GW was partially supported by NSF DMS grant 2104704.}
\author{Guang Yang}
\address{Department of Mathematics\\
	Purdue University, West Lafayette, IN 47907.}
\email{yang2220@purdue.edu}
\begin{document}

	\begin{abstract} 
		
We provide a probabilistic proof of the fundamental gap estimate for Schr\"odinger operators in convex domains on the sphere, which extends the probabilistic proof of F. Gong, H. Li, and D. Luo for the Euclidean case. Our results further generalize the results achieved for the Laplacian by S. Seto, L. Wang, and G. Wei, as well as by C. He, G. Wei, and Qi S. Zhang. The essential ingredient in our analysis is the reflection coupling method on Riemannian manifolds.	
	\end{abstract}
	
	\maketitle
	
	\section{Introduction}
	
	In the context of a bounded smooth domain $\Omega$ within a Riemannian manifold, the eigenvalues of the Laplacian or more generally, Schr\"odinger operators on $\Omega$ subject to Dirichlet boundary conditions are ordered as follows:
	$$
	\lambda_{1}<\lambda_{2} \leq \lambda_{3} \cdots \rightarrow \infty.
	$$
	
	The estimate of the gap between the first two eigenvalues, known as the fundamental (or mass) gap, denoted as
	$$
\Gamma(\Omega)= \lambda_{2}-\lambda_{1}>0,
$$
holds significant importance in both mathematics and physics, and has been a subject of  active  study.

In a noteworthy achievement, using two point maximum principle B. Andrews and J. Clutterbuck successfully proved the fundamental gap conjecture for convex domains in Euclidean space \cite{MR2784332}. Namely, for Schr\"odinger operators with convex potential, we have $\Gamma(\Omega) \ge \tfrac{3\pi^2}{D^2}$ for convex domains $\Omega \subset \mathbb R^n$, where $D$ is the diameter of the domain.  More recently, the second author, along with coauthors, extended the same estimate to convex domains on the sphere for the Laplacian \cite{MR3960269, MR4124117, MR4349140}.

An essential step in proving the fundamental gap conjecture is the log-concavity estimate of the first eigenfunction. The following definition was firstly introduced by B. Andrews and J. Clutterbuck \cite{MR2784332}.  
An even function $\tilde{V} \in C^{1}([-D/2, D / 2])$ is called a modulus of concavity for $V \in C^{1}(\Omega)$, if for all $x, y \in \Omega, x \neq y$, one has
\begin{equation}\label{eq:mod-of-concavity}
\left\langle\nabla V(y), \gamma'(y) \right\rangle-\left\langle\nabla V(x), \gamma'(x) \right\rangle \leq 2 \tilde{V}^{\prime}\left(\frac{\rho(x,y)}{2}\right),
\end{equation}
where $\gamma$ represents the minimizing geodesic that connects $x, y$ and $\rho$ denotes the geodesic distance function. Intuitively, this says that $V$ is ``more concave" than $\tilde{V}$. If the inequality \eqref{eq:mod-of-concavity} is reversed
\begin{equation}\label{eq:mod-of-convexity}
   \left\langle\nabla V(y), \gamma'(y) \right\rangle-\left\langle\nabla V(x), \gamma'(x) \right\rangle \geq 2 \tilde{V}^{\prime}\left(\frac{\rho(x,y)}{2}\right), 
\end{equation}
we say $\tilde{V} $ is a modulus of convexity for $V$. 

Let $\phi_1$ be the first positive Dirichlet eigenfunction of the Schr\"odinger operator $-\Delta + V$ on $\Omega$ and $\bar{\phi}_1$ the first Dirichlet eigenfunction of the one-dimensional model given in \eqref{one dim model}. The key estimates in obtaining the fundamental gap lower bound is the following log-concavity estimate of the first eigenfunction. 
	\[  \langle \nabla \log \phi_1(y), \gamma'(y)  \rangle -\langle \nabla\log \phi_1(x), \gamma'(x)  \rangle\leq 2\log \bar{\phi}_1'(\rho(x,y)/2)  \]
 for any $x \neq y \in \Omega$. Such log-concavity estimate was proved for Schr\"odinger operators in convex domains $\Omega \subset \mathbb R^n$ in \cite{MR2784332}, and for the Laplacian in convex domains $\Omega \subset \mathbb S^n$ with diameter $D<\pi/2$ in \cite{ MR3960269}. 
 The diameter restriction was removed 
 in \cite{MR4124117}. 
 

	
	In this paper we give a probabilistic proof of this estimate. Denote by $\mathbb M^n_k$ the $n$-dimensional simply connected manifolds with constant sectional curvature $k$. Our first main result is the next 
	\begin{theo} \label{log-concavity} 
		Suppose $\Omega\subset  \mathbb M^n_k$ $(k\ge 0)$ is a bounded strictly convex domain with diameter $D<\pi/\sqrt{k}$. Let $\lambda _1, \phi_1>0$ be the first Dirichlet eigenvalue,  eigenfunction of $-\Delta+V$ on $\Omega$ 
  and $\bar{\lambda}_1, \bar{\phi}_1$ be the first Dirichlet eigenvalue,  eigenfunction of the one-dimensional model
	\begin{equation}
	    - \frac{\partial^2}{\partial s^2}+(n-1)tn_k(s)\frac{\partial}{\partial s}+\Tilde{V}(s),\ s\in [-D/2,D/2],  \label{one dim model}
      \end{equation}
      where $tn_k$ is defined in \eqref{tn_k}. 
      If $\Tilde{V}$ is a modulus of convexity of $V$, and 
      \begin{equation}
         tn_k \left[ (2\lambda_1 -V(x) -V(y) ) -  2 (\bar{\lambda}_1  -\tilde{V} (\rho(x,y)/2)) \right]\ge 0,  \label{eqn:condition om lambda and V}
      \end{equation}then $\log\bar{\phi}_1$ is a modulus of concavity of $\log\phi_1$.   
	\end{theo}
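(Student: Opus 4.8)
The plan is to realize the two-point maximum principle of Andrews--Clutterbuck probabilistically: run two copies of the ground-state diffusion of $-\Delta+V$, couple them by reflection along the connecting minimizing geodesic, and show that the defect in the asserted inequality cannot become positive along the coupling. Write $u=\log\phi_1$ and $\bar u=\log\bar\phi_1$. From $(-\Delta+V)\phi_1=\lambda_1\phi_1$ one has the Riccati identity $\Delta u+|\nabla u|^2=V-\lambda_1$ on $\Omega$, and from \eqref{one dim model} one has $\bar u''+(\bar u')^2=(n-1)\,tn_k\,\bar u'+\tilde V-\bar\lambda_1$ on $[-D/2,D/2]$; note that $\bar\phi_1$ being even forces $\bar u'(0)=0$. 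Consider the diffusion $X_t$ with generator $L=\Delta+2\langle\nabla u,\nabla\,\cdot\,\rangle$, the ground-state transform of $-\Delta+V$: its drift $2\nabla\log\phi_1$ is singular and inward-pointing near $\partial\Omega$, so $X_t$ is non-explosive and stays in the interior of $\Omega$. Let $Y_t$ be a second copy coupled to $X_t$ by reflection along the minimizing geodesic $\gamma_t$ from $X_t$ to $Y_t$; strict convexity of $\Omega$ together with $D<\pi/\sqrt k$ guarantees that this geodesic is unique, lies in $\Omega$, depends smoothly on the endpoints and stays off the cut locus, so the coupling is well defined up to the coupling time $\tau=\inf\{t:X_t=Y_t\}$. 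For fixed $x\neq y\in\Omega$ the asserted inequality is precisely $W_0\le0$ for the process
\[
 W_t:=\langle\nabla u(Y_t),\gamma_t'(Y_t)\rangle-\langle\nabla u(X_t),\gamma_t'(X_t)\rangle-2\bar u'(\rho_t/2),\qquad \rho_t:=\rho(X_t,Y_t),
\]
started at $(X_0,Y_0)=(x,y)$; and $W_t\to0$ as $t\uparrow\tau$, since $\gamma_t$ degenerates and $\bar u'(0)=0$ at the coupling time.

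The heart of the matter is an Itô computation along the coupling. The standard reflection-coupling formula on the space form $\mathbb M^n_k$ (the second variation of arclength combined with the Jacobi-field/Hessian comparison in constant curvature $k$) gives, for $t<\tau$,
\[
 d\rho_t=2\sqrt2\,d\beta_t+\Bigl[\,2\bigl(\langle\nabla u(Y_t),\gamma_t'(Y_t)\rangle-\langle\nabla u(X_t),\gamma_t'(X_t)\rangle\bigr)-2(n-1)\,tn_k(\rho_t/2)\,\Bigr]\,dt,
\]
with $\beta_t$ a one-dimensional Brownian motion; the first bracketed term is exactly the combined contribution of the two drifts $2\nabla u$ to $d\rho_t$, and the second is the geometric term coming from the transverse Jacobi fields in curvature $k$. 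Differentiating the Riccati identity for $u$ in the direction $\gamma_t$ (a Bochner/Weitzenb\"ock computation, in which the Ricci term $\Ric=(n-1)k\ge0$ appears with the favorable sign), applying Itô to $\bar u'(\rho_t/2)$ via the model identity for $\bar u$ above, and combining, one assembles a stochastic differential inequality
\[
 dW_t\ \ge\ dM_t+\bigl(c_t\,W_t+R_t\bigr)\,dt,
\]
where $M_t$ is a local martingale, $c_t$ is bounded along the path, and $R_t\ge0$ on $\{W_t\ge0\}$. The nonnegativity of $R_t$ is precisely where the two hypotheses enter: that $\tilde V$ is a modulus of convexity of $V$ absorbs the $\nabla V$-difference produced by differentiating the Riccati identity, while the sign condition \eqref{eqn:condition om lambda and V} disposes of the remaining zeroth-order term $tn_k\bigl[(2\lambda_1-V(x)-V(y))-2(\bar\lambda_1-\tilde V(\rho/2))\bigr]$, with $k\ge0$ controlling the curvature contribution.

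With this inequality in hand, suppose for contradiction $W_0>0$ and set $\sigma=\tau\wedge\inf\{t:W_t\le0\}$. On $[0,\sigma)$ we have $W_t>0$, hence $d\bigl(e^{-\int_0^tc_s\,ds}W_t\bigr)\ge e^{-\int_0^tc_s\,ds}\,dM_t$, so $e^{-\int_0^tc_s\,ds}W_{t\wedge\sigma}$ is a nonnegative local submartingale; after the usual localization (at the stopping times at which $(X_t,Y_t)$ approaches $\partial\Omega$) and passage to the limit, one obtains $W_0\le\E{e^{-\int_0^{\sigma}c_s\,ds}W_{\sigma}}$. But $W_\sigma=0$, either by continuity of $W$ if $\sigma<\tau$, or because $W_\tau=0$, so $W_0\le0$, a contradiction. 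Hence $W_0\le0$ for every pair $x\neq y$, which is exactly the log-concavity estimate, i.e.\ $\log\bar\phi_1$ is a modulus of concavity of $\log\phi_1$.

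The step I expect to be the main obstacle is the localization and limiting argument just invoked, namely controlling $W_t$ as $(X_t,Y_t)$ approaches $\partial\Omega$. When $X_t$ and $Y_t$ run toward nearly antipodal boundary points, $\langle\nabla u(Y_t),\gamma_t'(Y_t)\rangle-\langle\nabla u(X_t),\gamma_t'(X_t)\rangle\to-\infty$ at rate $-(d(X_t)^{-1}+d(Y_t)^{-1})$ while $-2\bar u'(\rho_t/2)\to+\infty$ as $\rho_t\uparrow D$, and one must show the former dominates, so that $\sup_{t<\sigma}W_t<\infty$ (indeed $W_t\to-\infty$) and the submartingale argument closes, together with showing $\sigma<\infty$ a.s.\ with the requisite uniform integrability. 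This is the probabilistic counterpart of the delicate boundary barrier estimates of Andrews--Clutterbuck and He--Wei--Zhang, and it is here that strict convexity of $\Omega$ and the sharp bound $D<\pi/\sqrt k$ are used quantitatively, via control of the angle between $\gamma_t'$ and the inner normal near $\partial\Omega$ and via keeping $\rho_t$ away from the cut locus. A secondary technical point is the rigorous derivation of the reflection-coupling SDE for $\rho_t$ in curvature $k$ (the index-form computation yielding the $-2(n-1)\,tn_k(\rho_t/2)$ drift) and the justification of Itô's formula for $\rho(X_t,Y_t)$ and for $W_t$, given that the distance function and $\gamma_t'$ are only smooth off the diagonal and the cut locus.
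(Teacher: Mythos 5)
There is a genuine gap, and it sits exactly at the point you gloss over with ``one assembles a stochastic differential inequality.'' You run the reflection coupling of the ground-state diffusion with drift $2\nabla\log\phi_1$ --- this is the coupling \eqref{couping1}--\eqref{coupling2} of the paper, which the paper uses only for the gap comparison (Theorem \ref{thm:comparison}). If you carry out the It\^o computation for $F_t=\langle\nabla\log\phi_1(Y_t),\gamma'(Y_t)\rangle-\langle\nabla\log\phi_1(X_t),\gamma'(X_t)\rangle$ with that coupling, the second-order variation in the tangential direction $E_n=e_n\oplus(-e_n)$ produces the terms $2\,tn_k(\xi_t)\bigl(\langle\nabla_{e_n}\nabla\log\phi_1(Y_t),e_n\rangle+\langle\nabla_{e_n}\nabla\log\phi_1(X_t),e_n\rangle\bigr)$ (see \eqref{1st order part 3}). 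These radial-Hessian terms are not expressible through $W_t$, $\rho_t$ and the model quantities, and they have no usable sign ($\Hess\log\phi_1$ tends to $-\infty$ near $\partial\Omega$), so the claimed inequality $dW_t\ge dM_t+(c_tW_t+R_t)\,dt$ with $R_t\ge0$ on $\{W_t\ge0\}$ does not close when $k>0$; it closes only when $tn_k\equiv0$, i.e.\ in the Euclidean case, where your outline essentially reproduces Gong--Li--Luo. The paper's essential new idea, which your proposal is missing, is to replace the coupling by the modified pair $(X'_t,Y'_t)$ of \eqref{eq:new-couple-diffusion}, with the extra drift $\pm 2\,tn_k(\rho/2)\,\gamma'$ along the connecting geodesic (so that $(X'_t,Y'_t)$ is no longer a coupling of a single diffusion). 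The first-order derivatives of $F$ in the $e_n\oplus0$ and $0\oplus e_n$ directions generated by this extra drift, \eqref{1st order part 2}, cancel the problematic Hessian terms exactly, and only then does one arrive at the closed inequality \eqref{log-concavity inequality differential form}, into which the modulus-of-convexity hypothesis, condition \eqref{eqn:condition om lambda and V}, $k\ge0$, and the elementary bound $\langle\nabla\omega(Y'_t),e_n\rangle^2+\langle\nabla\omega(X'_t),e_n\rangle^2\ge F_t^2/2$ enter.

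A secondary discrepancy: the boundary-barrier analysis you flag as the main obstacle is not how the paper closes the argument, and is not needed. The paper extends $\tilde V$ to $[-D'/2,D'/2]$ with $D'>D$ so that $\Psi=(\log\bar\phi_1)'$ is bounded on $[0,D/2]$, uses the uniform upper bound on $\Hess\log\phi_1$ (Proposition \ref{Solutions to heat equations}(4)) and the fact that the processes never reach $\partial\Omega$ (Lemma \ref{lem:process} and its analogue for $(X'_t,Y'_t)$), and then stops at $\tau'\wedge N$, takes expectations, and lets $N\to\infty$ using that $F_{\tau'}-2\Psi(\xi_{\tau'})=0$ at the (a.s.\ finite) meeting time $\tau'$ of the modified pair (Lemma \ref{lem:successful coupling for modified diffusion}); finally $D'\to D$. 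So even if your localization worries could be resolved, the proof cannot be completed with the unmodified coupling, because the pointwise drift computation itself does not produce the required sign structure on the sphere.
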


Note that $tn_k =0$ when $k=0$, so (\ref{eqn:condition om lambda and V}) is automatically satisfied for $\mathbb R^n$. When $k=1$ and $V=0$, by choosing $\tilde{V}=0$, (\ref{eqn:condition om lambda and V}) becomes $\lambda_1 \ge \bar{\lambda}_1$, which holds by Lemma 3.12 in \cite{MR3960269}. Therefore the above theorem recovers the logconcavity estimates in  \cite{MR2784332, MR3960269, MR4124117},  
extends the results in \cite{MR3960269, MR4124117} for  the Laplacian operator to Schr\"odinger operators. Condition (\ref{eqn:condition om lambda and V}) indicates interesting difference between $\mathbb R^n$ and $\mathbb S^n$ for Schr\"odinger operators. In fact it is not clear if \cite[Remark 1.4]{MR3960269} holds as stated.

As mentioned before, our argument is based on the reflection coupling method (see for instance, \cite{MR1120916, kendall, Hsu}), which has been used in \cite{MR3489850} to give a probabilistic proof for convex domain in the Euclidean space. Some of our computations are similar to that of \cite{MR3960269, MR4124117}, but the whole proof is much simplified. Namely, the parabolic analysis in \cite{MR4124117} is not needed at all. Also we do not need the initial analysis or log-concavity assumption of the first eigenfunction as in \cite{MR3489850}. It is also worth mentioning that coupling methods have  been used to derive lower bounds for the first nonzero Neumann eigenvalue \cite{ chen1997general, MR4386037, MR1308707}.


With the log-concavity estimate, another ingredient for getting  the fundamental gap estimate is the following gap comparison of \cite[Theorem 4.1]{MR3960269}. We extend it to Schr\"odinger operators with a probabilistic proof, which simplifies the original proof.
	\begin{theo}\label{thm:comparison}
        Let $(M^n,g)$ be a complete Riemannian manifold with Ricci curvature lower bound $(n-1)k, k\in \mathbb{R}$ and $\Omega\subset M^n$ be a convex domain with diameter $D>0$ ($D<\pi/\sqrt{k}$ if $k>0$). Let $\lambda_1,\lambda_2$ be the first two Dirichlet eigenvalues of the Schr\"odinger operator $-\Delta+V$ on $\Omega$ and $\phi_1$ be the corresponding first eigenfunction. Let $\Bar{\lambda}_1, \Bar{\lambda}_2$ be the first two Dirichlet eigenvalues of the one-dimensional model \eqref{one dim model} on $[-D/2,D/2]$ and $\Tilde{\phi}_1$ be the corresponding first eigenfunction. If $\log\Tilde{\phi}_1$ is a modulus of concavity of $\log\phi_1$, then 
        \[ \Gamma(\Omega)=\lambda_{2}-\lambda_{1} \geq \bar{\lambda}_{2}(n,k, D)-\bar{\lambda}_{1}(n, k,D) .  \]
	\end{theo}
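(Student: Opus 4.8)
The plan is to pass to the ground-state (Doob) transform, reduce everything to the one-dimensional model, and then run a single reflection coupling; the hypothesis that $\log\tilde{\phi}_1$ is a modulus of concavity of $\log\phi_1$ will be used exactly once. Write $\Gamma:=\lambda_2-\lambda_1$ and $\bar{\Gamma}:=\bar{\lambda}_2-\bar{\lambda}_1$, let $\phi_2$ be a second Dirichlet eigenfunction of $-\Delta+V$, and set $u:=\phi_2/\phi_1$; since $\lambda_1<\lambda_2$, $u$ is nonconstant and
\[ \mathcal{L}u:=\Delta u+2\langle\nabla\log\phi_1,\nabla u\rangle=-\Gamma u . \]
Likewise, with $\tilde{\phi}_2$ a second Dirichlet eigenfunction of \eqref{one dim model} and $\bar{u}:=\tilde{\phi}_2/\tilde{\phi}_1$, one gets $\bar{\mathcal{L}}\bar{u}=-\bar{\Gamma}\bar{u}$ on $[-D/2,D/2]$, where $\bar{\mathcal{L}}g:=g''+\bigl(2(\log\tilde{\phi}_1)'-(n-1)tn_k\bigr)g'$; since $\tilde V$ is even, $\bar u$ is odd, and by standard one-dimensional theory (cf.\ \cite{MR2784332, MR3960269}) $\bar u'\ge 0$ on $[0,D/2]$ with $\bar u'(0)>0$, so also $\bar u\ge 0$ on $[0,D/2]$. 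Let $\xt$ be the $\mathcal{L}$-diffusion on $\Omega$: as $\nabla\log\phi_1$ blows up and points inward near $\partial\Omega$ (Hopf lemma), $\xt$ never reaches $\partial\Omega$, and since $u$ is bounded, $e^{\Gamma t}u(\xt)$ is a genuine martingale. I would then take a second copy $\yt$ from some $y_0$ and couple $\xt,\yt$ by reflection along the minimizing geodesic joining them (interior to $\Omega$ by convexity); as in the reflection‑coupling computation behind Theorem \ref{log-concavity}, up to the coupling time $\rho_t:=\rho(\xt,\yt)$ is a semimartingale with $d\langle\rho\rangle_t=8\,dt$ and, using $\Ric\ge(n-1)k$,
\[ d\rho_t\le 2\sqrt2\,dB_t+\Bigl[-4(n-1)\,tn_k(\rho_t/2)+2\bigl(\langle\nabla\log\phi_1(\yt),\gamma'(\yt)\rangle-\langle\nabla\log\phi_1(\xt),\gamma'(\xt)\rangle\bigr)\Bigr]dt , \]
the first term being the sharp curvature contribution from the Jacobi-field comparison centered at the midpoint of the geodesic (valid through the cut locus in the Kendall--Cranston sense), the second being the contribution of the drift $2\nabla\log\phi_1$.

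Next I would invoke \eqref{eq:mod-of-concavity} with $V=\log\phi_1$ and $\tilde V=\log\tilde{\phi}_1$, so that the bracketed drift above is $\le -4(n-1)tn_k(\rho_t/2)+4(\log\tilde{\phi}_1)'(\rho_t/2)$. Applying Itô's formula to $\bar{u}(\rho_t/2)$ — legitimate since $\bar u'\ge 0$ on $[0,D]$, so the drift bound may be inserted — and eliminating $\bar u''$ through $\bar{\mathcal{L}}\bar u=-\bar{\Gamma}\bar u$, the drift of $\bar u(\rho_t/2)$ is at most
\[ \bar{\mathcal{L}}\bar u(\rho_t/2)-(n-1)\,tn_k(\rho_t/2)\,\bar u'(\rho_t/2)=-\bar{\Gamma}\,\bar u(\rho_t/2)-(n-1)\,tn_k(\rho_t/2)\,\bar u'(\rho_t/2)\le -\bar{\Gamma}\,\bar u(\rho_t/2), \]
the last step using $tn_k\ge 0$ on $[0,D/2)$ (true because $k\ge 0$ and $D<\pi/\sqrt k$) together with $\bar u'\ge 0$. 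Hence $e^{\bar{\Gamma}t}\bar u(\rho_t/2)$ is a continuous nonnegative local supermartingale (stopped at the coupling time and then constant $0$, since $\bar u(0)=0$), hence a true supermartingale, and therefore $\E{e^{\bar{\Gamma}t}\bar u(\rho_t/2)}\le\bar u(\rho_0/2)\le\bar u(D/2)$ for every $t\ge 0$, where $\rho_0=\rho(x_0,y_0)$.

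To conclude, pick $x_0,y_0\in\Omega$ with $u(y_0)>u(x_0)$ (possible as $u$ is nonconstant) and start the coupling from $(x_0,y_0)$. Since $u$ is Lipschitz on $\overline{\Omega}$ while $\bar u$ is continuous, strictly positive on $(0,D/2]$, and $\bar u(s)\sim\bar u'(0)s$ as $s\to0^+$, the constant $C_0:=\sup_{p\ne q}\frac{u(q)-u(p)}{\bar u(\rho(p,q)/2)}$ is finite, so $u(\yt)-u(\xt)\le C_0\,\bar u(\rho_t/2)$ for all $t$ (both sides vanish once the copies couple). Combining the martingale identity $u(y_0)-u(x_0)=\E{e^{\Gamma t}\bigl(u(\yt)-u(\xt)\bigr)}$ with the two estimates above,
\[ 0<u(y_0)-u(x_0)\le C_0\,\E{e^{\Gamma t}\bar u(\rho_t/2)}=C_0\,e^{(\Gamma-\bar{\Gamma})t}\,\E{e^{\bar{\Gamma}t}\bar u(\rho_t/2)}\le C_0\,\bar u(D/2)\,e^{(\Gamma-\bar{\Gamma})t}\qquad(t\ge 0), \]
and letting $t\to\infty$ forces $\Gamma-\bar{\Gamma}\ge 0$, i.e.\ $\lambda_2-\lambda_1\ge\bar{\lambda}_2-\bar{\lambda}_1$, as claimed.

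The step I expect to be the main obstacle is the distance inequality for the reflection coupling: obtaining the \emph{sharp} curvature term $-4(n-1)tn_k(\rho_t/2)$ — the half-distance phenomenon peculiar to reflection coupling, rather than the cruder bound a single $\Delta\rho$ estimate would give — justifying it across the cut locus, and combining it with \eqref{eq:mod-of-concavity} so that the Itô drift lands exactly on $\bar{\mathcal{L}}$. The remaining ingredients — that the $\mathcal{L}$-diffusion never hits $\partial\Omega$, that $u=\phi_2/\phi_1$ is bounded and Lipschitz on $\overline{\Omega}$, and that the processes at hand are honest rather than merely local (sub/super)martingales — are routine. Note that $\Ric\ge(n-1)k$ is used only in that single inequality, and the log-concavity hypothesis only once; in particular no parabolic analysis and no a priori information on $u$ are needed, consistent with the simplifications the paper advertises.
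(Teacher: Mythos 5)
Your proof follows essentially the same route as the paper's: pass to the ground-state transform $u=\phi_2/\phi_1$, run the reflection coupling $(X_t,Y_t)$, apply It\^o to $\bar u(\rho_t/2)$ (with $\bar u=\tilde\phi_2/\tilde\phi_1$) to get a supermartingale $e^{\bar\Gamma t}\bar u(\rho_t/2)$, and compare against the Lipschitz behavior of $u$ to force $\Gamma\ge\bar\Gamma$ as $t\to\infty$. The only packaging difference is the single constant $C_0=\sup_{p\ne q}\,(u(q)-u(p))/\bar u(\rho(p,q)/2)$ in place of the paper's separate Lipschitz constant for $u$ and the linear lower bound $\Phi(s)\ge c_1 s$; that is cosmetic.

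However, there is a genuine error in your drift inequality for $\rho_t$, and it is what forces you into an assumption the theorem does not make. The Jacobi field $Q_i(s)=\tfrac{cs_k(s)}{cs_k(\rho_t/2)}e_i(s)$ on $[-\rho_t/2,\rho_t/2]$ has index form $I(Q_i,Q_i)=-2\,tn_k(\rho_t/2)$, so the sharp reflection-coupling drift (with Ricci comparison) is
\[
\sum_{i=1}^{n-1}I(Q_i,Q_i)\le -2(n-1)\,tn_k(\rho_t/2),
\]
not $-4(n-1)\,tn_k(\rho_t/2)$ as you wrote. With the correct coefficient, the It\^o drift of $\bar u(\rho_t/2)$ is
\[
\tfrac12\,\bar u'(\rho_t/2)\bigl[-2(n-1)tn_k(\rho_t/2)+4(\log\tilde\phi_1)'(\rho_t/2)\bigr]+\bar u''(\rho_t/2)
=\bar{\mathcal L}\,\bar u(\rho_t/2)=-\bar\Gamma\,\bar u(\rho_t/2),
\]
landing exactly on the one-dimensional operator with nothing left over. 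Your extra factor of two produces the spurious term $-(n-1)\,tn_k(\rho_t/2)\,\bar u'(\rho_t/2)$, which you then discard by asserting $tn_k\ge 0$ ``because $k\ge 0$''. But the theorem is stated for every $k\in\mathbb R$; for $k<0$ one has $tn_k<0$, the spurious term has the wrong sign, and your argument as written does not establish the claim. The fix is simply to use the correct coefficient $-2(n-1)\,tn_k(\rho_t/2)$, after which no sign condition on $k$ is needed and the argument covers the full stated range of curvature lower bounds.
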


   The above two theorems immediately give the following. 
 \begin{coro}
     		Let $\Omega$ be a strictly convex domain  in $\mathbb M_k$ ($k\geq 0$) with diameter $D<\pi/\sqrt{k}$. Let $\lambda_{i}(i=1,2)$ be the first two Dirichlet eigenvalues of the Schr\"odinger operator $-\Delta+V$ on $\Omega$. If $\Tilde{V}$ is a modulus of convexity of $V$ and (\ref{eqn:condition om lambda and V}) holds, then
		$$
\Gamma(\Omega)=\lambda_{2}-\lambda_{1} \geq \bar{\lambda}_{2}(n, k,D)-\bar{\lambda}_{1}(n,k, D) 
		$$
		where $\bar{\lambda}_{i}(n,k, D)(i=1,2)$ are the first two Dirichlet eigenvalues of the operator \eqref{one dim model} on $[-D/2,D/2]$.
  
 \end{coro}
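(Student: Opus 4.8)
The plan is to simply chain Theorem~\ref{log-concavity} and Theorem~\ref{thm:comparison} together; the only actual content is to verify that the hypotheses of the two statements line up on the space form $\mathbb{M}^n_k$.

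First I would invoke Theorem~\ref{log-concavity} with exactly the given data: $\Omega\subset\mathbb{M}^n_k$ $(k\ge 0)$ is strictly convex with diameter $D<\pi/\sqrt k$, $\tilde V$ is a modulus of convexity of $V$, and \eqref{eqn:condition om lambda and V} holds. Its conclusion is that $\log\bar\phi_1$ is a modulus of concavity of $\log\phi_1$, where $\bar\phi_1$ is the first Dirichlet eigenfunction of the one-dimensional model \eqref{one dim model} on $[-D/2,D/2]$ and $\phi_1$ is the first Dirichlet eigenfunction of $-\Delta+V$ on $\Omega$.

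Next I would apply Theorem~\ref{thm:comparison} with $(M^n,g)=\mathbb{M}^n_k$. The simply connected space form of constant sectional curvature $k$ has Ricci curvature identically equal to $(n-1)k$, so the Ricci lower bound hypothesis of Theorem~\ref{thm:comparison} is met with this very $k$; a strictly convex domain is in particular convex; and the diameter hypothesis ($D<\pi/\sqrt k$ when $k>0$, no restriction when $k=0$) is precisely what is assumed. The eigenfunction $\tilde\phi_1$ appearing in Theorem~\ref{thm:comparison} is by definition the first Dirichlet eigenfunction of the one-dimensional model \eqref{one dim model}, i.e.\ it coincides with the $\bar\phi_1$ produced in the previous step, so the hypothesis ``$\log\tilde\phi_1$ is a modulus of concavity of $\log\phi_1$'' has just been verified. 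Theorem~\ref{thm:comparison} then gives $\Gamma(\Omega)=\lambda_2-\lambda_1\ge\bar\lambda_2(n,k,D)-\bar\lambda_1(n,k,D)$, which is the assertion.

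There is no real obstacle beyond this bookkeeping; the entire difficulty of the corollary is already absorbed into the two theorems. The only points deserving a moment of care are the identification of the one-dimensional comparison model and its first eigenpair across the two statements (the $\bar\phi_1$ of Theorem~\ref{log-concavity} being the $\tilde\phi_1$ of Theorem~\ref{thm:comparison}), and the observation that on $\mathbb{M}^n_k$ the curvature hypotheses of Theorem~\ref{thm:comparison} are not merely satisfied but saturated, while ``strictly convex'' is a strengthening of the ``convex'' hypothesis used there.
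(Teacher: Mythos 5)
Your proposal is correct and is precisely the argument the paper intends: the corollary is stated after the two theorems with the remark that they "immediately give" it, and the content is exactly the hypothesis bookkeeping you carry out (Ricci $=(n-1)k$ on $\mathbb{M}^n_k$, strictly convex $\Rightarrow$ convex, matching the one-dimensional eigenpair across the two statements). Nothing further is needed.
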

	This fundamental gap estimate recovers the estimate in \cite{MR2784332}, and extends the estimate in \cite{MR3960269, MR4124117} to Schr\"odinger operators. 
 

The rest of this paper is structured as follows. Section 2 provides some necessary background materials for both Riemannian geometry and stochastic analysis. We define and study two diffusion couplings in Section 3, which will be used in our later proofs. Section 4 is devoted to the proof of the log-concavity of the first eigenfunction, and a proof for the gap comparison is given in Section 5.

\textbf{Acknowledgement}: We would like to thank Malik Tuerkoen and the referee for their careful reading of the paper and very helpful comments. 

	\section{Preliminaries}	
	
	\subsection{Brownian motion and diffusion in Euclidean space}
	In this subsection, we gather the essential materials from the classical stochastic analysis. These materials here are well-known; we include them so that readers without any probability background can acquire  intuitions and basic understandings. Some excellent and comprehensive presentations of these topics can be found in \cite{karatzas1991brownian, revuz2013continuous,MR2001996,MR3930614}.
	
	We fix a complete probability space $(A, \mathcal{F}, \mathbb{P})$. Here $A$ is the sample space,  $\mathcal F$ is a $\sigma$-field of subsets of $A$,  and $\mathbb P$  a complete probability measure on $(A, \mathcal{F})$. 

	\begin{defi}[Real Brownian motion] 
		A real-valued process $\{W_t\}_{t\geq 0}$ {{defined on $A$}} is said to be a standard Brownian motion if 
		\begin{enumerate}
			\item $W_0=0$.
			\item For each $t> 0$, $W_t$ is Gaussian with mean $0$ and variance $t$.
			\item $W_t$ has independent increments; that is, $W_t-W_s$ and $W_u-W_v$ are independent whenever $[s,t]\cap [u,v]=\emptyset$.
			\item $W_t$ is continuous as a function of $t\in [0, +\infty)$.
		\end{enumerate}
	\end{defi}
	More generally, for any $n\geq 1$, we can define a standard Brownian motion as a $\mathbb{R}^n$-valued process whose components are independent standard real Brownian motions.
	
	Let $\{\mathcal{F}_t\}_{t\geq 0}$ be a family of increasing sub $\sigma$-field of $\mathcal{F}$. A process $\left\{X_{t}\right\}_{t\geq 0}$ is called $\mathcal{F}_t$-adapted if $X_t$ is $\mathcal{F}_t$ measurable for any $t\geq 0$ (i.e. for any real numbers $a<b$, we have $\{\omega\in A:  a<X_t(\omega)<b \}\in \mathcal{F}_t$). 
	\begin{defi}
		A process $\left\{X_{t}\right\}_{t\geq 0}$ is said to be a $\mathcal{F}_t$-martingale if $X_{t}$ is $\mathcal{F}_t$-adapted and integrable for every $t \geq 0$, and the conditional expectation $\mathbb{E}\left(X_{t} \mid \mathcal{F}_{s}\right)$ is equal to $X_s$ almost surely, for all $0 \leq s \leq t$. In particular, $\mathbb{E}(X_t)=\mathbb{E}(X_0)$.
	\end{defi}

Brownian motion is the most classical example of martingales. The associated  $\mathcal{F}_t$ is given by the $\sigma$-field generated by $\{\omega\in A: a< W_s(\omega) <b, \}$,  where $0\leq s\leq t$ and $ a<b$ are any real numbers. We will call $\mathcal{F}_t$ the $\sigma$-field generated by $\{W_s\}_{0\leq s\leq t}$.

 For any $t>0$, a Brownian motion has unbounded variation on $[0,t]$. More precisely, 
	\[ \lim_{\abs{\Delta t_i}\rightarrow 0}\sum_{i}\abs{W_{t_{i+1}}-W_{t_i}}=+\infty  \]
	almost surely, where $\{t_i\}_{i\geq 1}$ is any partition of $[0,t]$ with mesh $\Delta t_i$. As a result, it is in general impossible to define integrals against Brownian motions as Riemann-Stieltjes integrals. To properly define stochastic integrals, we will need the so called quadratic variation, which we describe now.

     Suppose $X_t$ is a square-integrable martingale (i.e. $\mathbb{E}(X^2_t)<+\infty$ for all $t\geq 0$), the quadratic variation of $X_t$ is defined as
	\[ \langle X\rangle_t=\lim_{\abs{\Delta t_i}\rightarrow 0}\sum_{i}\abs{X_{t_{i+1}}-X_{t_i}}^2,  \]
    where the limit is taken in the probability measure $\mathbb{P}$. If two square integrable martingales $X_t, Y_t$ are adapted to the same filtration $\mathcal{F}_t$, then their linear combinations are also square integrable martingales. We define their cross variation as
	\[ \langle X, Y\rangle_t=\frac{1}{4}\left( \langle X+Y\rangle_t-\langle X-Y\rangle_t   \right). \]
	The process $\langle X\rangle_t$ is always of bounded variation. In particular, we can find a locally bounded function $h(X)$ such that $d\langle X\rangle_t=h(X)_tdt$. We can give another characterization of standard Brownian motions through quadratic variation.
	\begin{prop}[Levy's characterization]
		If $X_t$ is a continuous square integrable martingale with $\langle X \rangle_t=t$ such that  $\mathbb{P}(X_0=0)=1$, then $X_t$ is a standard real Brownian motion. 
	\end{prop}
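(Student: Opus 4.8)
The plan is to run the classical exponential–martingale (characteristic function) argument, using It\^o's formula as the main analytic tool. Fix $\xi\in\mathbb{R}$ and consider the continuous complex-valued process
\[
M_t := \exp\!\left(\sqrt{-1}\,\xi X_t + \tfrac{1}{2}\xi^2 t\right).
\]
Since $X_t$ is a continuous square integrable martingale with $\langle X\rangle_t = t$, we have $d\langle X\rangle_t = dt$; applying It\^o's formula to $f(x,t) = \exp(\sqrt{-1}\,\xi x + \tfrac12\xi^2 t)$ and using $f_t + \tfrac12 f_{xx} = \tfrac12\xi^2 f - \tfrac12\xi^2 f = 0$ yields $dM_t = \sqrt{-1}\,\xi\, M_t\, dX_t$. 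Hence $M_t$ is a continuous local martingale; because $|M_t| = e^{\xi^2 t/2}$ and the integrand $\sqrt{-1}\,\xi M_t$ are both bounded on each finite interval $[0,T]$, $M_t$ is in fact a true martingale there.

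Next I would read off the conditional law of the increments from the martingale property: for $0\le s\le t$,
\[
\mathbb{E}\big[\, e^{\sqrt{-1}\,\xi X_t + \frac12\xi^2 t}\,\big|\,\mathcal{F}_s \,\big] = e^{\sqrt{-1}\,\xi X_s + \frac12\xi^2 s},
\]
equivalently
\[
\mathbb{E}\big[\, e^{\sqrt{-1}\,\xi (X_t - X_s)}\,\big|\,\mathcal{F}_s \,\big] = e^{-\frac12\xi^2(t-s)}.
\]
Because the right-hand side is deterministic, this simultaneously shows (i) that $X_t - X_s$ is independent of $\mathcal{F}_s$ (test against bounded $\mathcal{F}_s$-measurable functions and invoke that characteristic functions determine joint distributions), and (ii) that $X_t - X_s$ is Gaussian with mean $0$ and variance $t-s$, since $e^{-\xi^2(t-s)/2}$ is the characteristic function of that law. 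Iterating across a partition $0 = t_0 < t_1 < \cdots < t_m$ then gives joint independence of the increments $X_{t_j}-X_{t_{j-1}}$, hence independence of $X_t-X_s$ and $X_u-X_v$ whenever $[s,t]\cap[u,v]=\emptyset$.

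Finally I would assemble the definition: the hypotheses supply $X_0=0$ almost surely and continuity of $t\mapsto X_t$, while the two displays above supply the $N(0,t)$ marginals together with independence of increments. These are exactly conditions (1)--(4) defining a standard real Brownian motion, so $X_t$ is one.

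The main obstacle I anticipate is not the algebra but the two analytic justifications: that $M_t$ is a genuine (not merely local) martingale, which needs an integrability bound on the stochastic integral $\int_0^t \sqrt{-1}\,\xi M_u\, dX_u$ over finite horizons, and that It\^o's formula applies to the complex-valued $f$ (equivalently, to the real pair $\cos(\xi x)$, $\sin(\xi x)$ carrying the time factor). Both are routine once the stochastic integration theory of this section is in hand. An alternative that avoids complex exponentials is to apply It\^o's formula separately to $t\mapsto \cos(\xi X_t)\,e^{\xi^2 t/2}$ and $t\mapsto \sin(\xi X_t)\,e^{\xi^2 t/2}$, producing two real martingales whose combination gives the same conditional characteristic function identity.
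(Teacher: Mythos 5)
The paper states L\'evy's characterization in its preliminaries as a known background fact and does not give a proof, so there is no internal argument for me to compare against. Your proof is the classical exponential--martingale argument, and it is correct: applying It\^o's formula to $M_t = \exp\bigl(\sqrt{-1}\,\xi X_t + \tfrac12\xi^2 t\bigr)$ and using $d\langle X\rangle_t = dt$ kills the drift so $M_t$ is a continuous local martingale; the bound $|M_t| = e^{\xi^2 t/2}$ makes the integrand $\sqrt{-1}\,\xi M_u$ square-integrable against $d\langle X\rangle_u$ on each $[0,T]$, so $M_t$ is a true martingale there; and the deterministic conditional characteristic function $\mathbb{E}\bigl[e^{\sqrt{-1}\,\xi(X_t-X_s)}\mid\mathcal{F}_s\bigr] = e^{-\xi^2(t-s)/2}$, holding for all $\xi$, delivers both the $N(0,t-s)$ law of increments and independence from $\mathcal{F}_s$, which by iteration over a partition gives independence of increments over disjoint intervals. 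Combined with $X_0=0$ a.s.\ and path continuity, this verifies conditions (1)--(4) of the paper's definition of a standard real Brownian motion. The two technical points you flagged --- upgrading from local to true martingale, and applying It\^o to a complex-valued (equivalently, paired real-valued) test function --- are indeed the only places requiring care, and your justifications for both are sound.
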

	
	\begin{prop}\label{prop:Ito-integral}
		Let $Z_t$ be a $\mathcal{F}_t$-adapted process and $X_t$ be a square integrable $\mathcal{F}_t$-martingale such that
		\[ \int_{0}^{T}Z^2_sd\langle X\rangle_s<+\infty. \]
		Then, along any partitions of $[0,T]$, the limit
		\[ \lim_{\abs{\Delta t_i}\rightarrow 0}\sum_{i}Z_{t_i}(X_{t_{i+1}}-X_{t_i})  \]
		exists in $L^2(A)$. This limit will be denoted as $\int_{0}^{t}Z_sdX_s.$
	\end{prop}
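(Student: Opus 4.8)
The plan is to follow the classical construction of the Itô integral in three stages: first define the integral for \emph{simple} (piecewise constant, adapted) integrands directly as a finite Riemann sum and establish the Itô isometry for them; then approximate a general integrand $Z$ with $\int_0^T Z_s^2\, d\langle X\rangle_s<+\infty$ by simple processes in the Hilbert space $\mathcal{H}:=L^2\bigl(A\times[0,T],\, d\mathbb{P}\otimes d\langle X\rangle\bigr)$; and finally use the isometry together with completeness of $L^2(A)$ to pass to the limit, identifying the Riemann sums in the statement as the integrals of the natural simple approximations of $Z$.

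\textbf{Step 1 (simple integrands and the isometry).} For a simple process $Z_s=\sum_j \xi_j \mathbf{1}_{(t_j,t_{j+1}]}(s)$ with $\xi_j$ bounded and $\mathcal{F}_{t_j}$-measurable, set $I(Z)=\sum_j \xi_j (X_{t_{j+1}}-X_{t_j})$. Squaring and taking expectation, the off-diagonal terms $\mathbb{E}\bigl[\xi_j\xi_k (X_{t_{j+1}}-X_{t_j})(X_{t_{k+1}}-X_{t_k})\bigr]$ with $j<k$ vanish: conditioning on $\mathcal{F}_{t_k}$ and using that $X_t$ is a martingale gives $\mathbb{E}[X_{t_{k+1}}-X_{t_k}\mid\mathcal{F}_{t_k}]=0$, while the remaining factors are $\mathcal{F}_{t_k}$-measurable. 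The diagonal terms satisfy $\mathbb{E}[\xi_j^2(X_{t_{j+1}}-X_{t_j})^2]=\mathbb{E}\bigl[\xi_j^2(\langle X\rangle_{t_{j+1}}-\langle X\rangle_{t_j})\bigr]$, which follows from the definition of quadratic variation (equivalently, from the fact that $X_t^2-\langle X\rangle_t$ is a martingale, conditioning again on $\mathcal{F}_{t_j}$). Summing yields the Itô isometry $\mathbb{E}[I(Z)^2]=\mathbb{E}\bigl[\int_0^T Z_s^2\, d\langle X\rangle_s\bigr]$, so $I$ is a linear isometry from the simple processes into $L^2(A)$.

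\textbf{Steps 2--3 (density, extension, and identification of the Riemann sums).} One shows the simple processes are dense in $\mathcal{H}$ among adapted integrands by the usual reductions: truncate to reduce to bounded $Z$; regularize in the time variable (e.g.\ via the averages $\tfrac1h\int_{s-h}^s Z_u\,du$, which preserve $\mathcal{F}_t$-adaptedness) to reduce to $Z$ with continuous paths; and for continuous adapted $Z$ the left-endpoint freezing $Z^\pi_s:=\sum_i Z_{t_i}\mathbf{1}_{(t_i,t_{i+1}]}(s)$ converges to $Z$ in $\mathcal{H}$ as the mesh $\abs{\Delta t_i}\to 0$, by dominated convergence using $d\langle X\rangle_s=h(X)_s\,ds$ with $h$ locally bounded. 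Hence $I$ extends uniquely to an isometry $\mathcal{H}\to L^2(A)$, and we define $\int_0^T Z_s\,dX_s:=I(Z)$. For any partition $\pi=\{t_i\}$ the Riemann sum $\sum_i Z_{t_i}(X_{t_{i+1}}-X_{t_i})$ is precisely $I(Z^\pi)$; since $Z^\pi\to Z$ in $\mathcal{H}$, the isometry gives $I(Z^\pi)\to I(Z)$ in $L^2(A)$, which is exactly the claimed convergence.

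\textbf{Main obstacle.} The delicate part is the density/approximation step: for a merely measurable adapted $Z$, the left-frozen Riemann sums need not converge on their own, so one must first pass to continuous (or left-continuous) representatives. This requires the standard but slightly technical mollification argument that respects the filtration, together with Fubini and dominated-convergence bookkeeping against the product measure $d\mathbb{P}\otimes d\langle X\rangle$. The rest is the bilinear computation of Step 1, and the only structural input beyond elementary Hilbert-space theory is the martingale property of $X_t$ and of $X_t^2-\langle X\rangle_t$.
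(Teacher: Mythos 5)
The paper never proves this proposition: it appears in the preliminaries as standard background, with the reader referred to textbook sources, so there is no in-paper argument to compare against. Your construction is the classical one (simple processes, It\^o isometry, density in $L^2(d\mathbb{P}\otimes d\langle X\rangle)$, isometric extension, identification of the Riemann sums as integrals of left-frozen simple approximations), and in outline it is correct; Step 1 in particular is complete, using exactly the two martingale inputs one needs, namely that $X_t$ and $X_t^2-\langle X\rangle_t$ are martingales.

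Two caveats separate what you prove from the statement as literally written. First, for convergence in $L^2(A)$ you need $\mathbb{E}\bigl[\int_0^T Z_s^2\,d\langle X\rangle_s\bigr]<+\infty$: the hypothesis as printed only asks for pathwise finiteness of $\int_0^T Z_s^2\,d\langle X\rangle_s$, under which the sums converge only in probability (by localization), not in $L^2$. Your argument silently works in the Hilbert space $\mathcal{H}=L^2\bigl(A\times[0,T],\,d\mathbb{P}\otimes d\langle X\rangle\bigr)$, i.e.\ under the stronger hypothesis; that is the right hypothesis for the $L^2$ conclusion, so this is a defect of the statement rather than of your argument, but you should say it explicitly. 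Second, as you yourself concede in the ``main obstacle'' paragraph, convergence of the left-endpoint sums $\sum_i Z_{t_i}(X_{t_{i+1}}-X_{t_i})$ requires $Z^\pi\to Z$ in $\mathcal{H}$, which holds for (left-)continuous adapted $Z$ but can fail for a merely measurable adapted $Z$, since the values $Z_{t_i}$ at the partition points need not determine $Z$ as an element of $\mathcal{H}$. So your scheme constructs $\int_0^t Z_s\,dX_s$ on all of $\mathcal{H}$, but it establishes the Riemann-sum representation claimed in the proposition only under a path-regularity assumption on $Z$; that is the standard reading (and the only one the paper uses, since its integrands are continuous adapted processes), and your proof is sound once that restriction is stated as part of the result rather than left as an aside.
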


We can now define stochastic integration.

\begin{defi}[It\^o's integral]
    The limit in Proposition~\ref{prop:Ito-integral} is called the It\^o's integral of $Z_t$ against $X_t$, and is denoted by $I_t(Z)=\int^{t}_{0}Z_s dX_s$. The quadratic variation and cross variation of It\^o's integral are given as
\begin{equation*}
    \langle I(Z) \rangle_t = \int_{0}^{t}Z^2_sd\langle X\rangle_s, \ \ \ \ \langle I(Z),I(W) \rangle_t=\int^{t}_{0} Z_s W_s d\langle X \rangle_s.
\end{equation*}
\end{defi}

	\begin{rema}
		It is important to note that in the previous approximating Riemann sum, we use the left point $t_i$ for $Z_s$ in each interval $[t_i,t_{i+1}]$. This special choice makes It\^o's integrals martingales.
	\end{rema}
	Another important type of stochastic integration is the following. 
	\begin{defi}
		The Stratonovich integral of  $Z_t$ against $X_t$ is defined to be
		\[  \int_{0}^{t} Z_s\circ dX_s := \int_{0}^{t} Z_s dX_s+\frac{1}{2}\langle Z, X \rangle_t. \]
	\end{defi}
	The change of variable formula for stochastic integration is drastically different from the classical calculus. 
 
	\begin{prop}[It\^o's formula]
		Let $f\in C^2(\mathbb{R}^n)$ and $\{X_t\}_{0\leq t\leq T}$ is a square integrable martingale. Then for any $0\leq t\leq T$,
		\begin{align}
			f(X_t)&=f(X_0)+\sum_{i=1}^{n}\int_{0}^{t}\partial_{i}f(X_s) dX^i_s+\frac{1}{2}\sum_{i,j=1}^{n}\int_{0}^{t}\partial^2_{i,j} f(X_s)d\langle X^i,X^j \rangle_s  \label{Ito's formula}  \\
		&=f(X_0)+\sum_{i=1}^{n}\int_{0}^{t}\partial_{i}f(X_s) \circ dX^i_s.  \nonumber
		\end{align}
	\end{prop}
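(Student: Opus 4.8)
The final statement is It\^o's formula for a $C^2$ function of a square-integrable martingale, and the plan is to prove it by the classical route: localize so that the path is bounded, expand $f$ to second order along a shrinking partition, and identify the three resulting sums with the two It\^o integrals and the quadratic-variation integral. First I would localize. Since $t\mapsto X_t$ is continuous, the stopping times $\tau_N=\inf\{t\ge 0:\abs{X_t}\ge N\}$ increase to $+\infty$ almost surely, so it suffices to prove the identity for each stopped process $X_{t\wedge\tau_N}$ and then let $N\to\infty$, the It\^o integrals converging by the $L^2$-isometry built into Proposition~\ref{prop:Ito-integral} and the ordinary integrals by dominated convergence because $\langle X^i,X^j\rangle$ is of bounded variation. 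For fixed $N$ the path stays in $\overline{B(0,N)}$, on which $f,\partial_i f,\partial^2_{ij}f$ are bounded and uniformly continuous, so one may as well assume $f$ has these properties globally.

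Next, fix a partition $0=t_0<\dots<t_m=t$ with mesh tending to $0$, write $\Delta X^i_k=X^i_{t_{k+1}}-X^i_{t_k}$, telescope $f(X_t)-f(X_0)=\sum_k\big(f(X_{t_{k+1}})-f(X_{t_k})\big)$, and apply Taylor's theorem to each increment:
\[
f(X_{t_{k+1}})-f(X_{t_k})=\sum_i\partial_i f(X_{t_k})\,\Delta X^i_k+\tfrac12\sum_{i,j}\partial^2_{ij}f(X_{t_k})\,\Delta X^i_k\Delta X^j_k+R_k,
\]
with $R_k=\tfrac12\sum_{i,j}\big(\partial^2_{ij}f(\xi_k)-\partial^2_{ij}f(X_{t_k})\big)\Delta X^i_k\Delta X^j_k$ for some $\xi_k$ on the segment from $X_{t_k}$ to $X_{t_{k+1}}$. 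Since $s\mapsto\partial_i f(X_s)$ is bounded and $\mathcal F_s$-adapted, summing the first group over $k$ gives, by the very definition of the It\^o integral (Proposition~\ref{prop:Ito-integral}), convergence in $L^2(A)$ to $\sum_i\int_0^t\partial_i f(X_s)\,dX^i_s$.

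The heart of the argument is the second group: one must show that in probability $\sum_k\partial^2_{ij}f(X_{t_k})\,\Delta X^i_k\Delta X^j_k\to\int_0^t\partial^2_{ij}f(X_s)\,d\langle X^i,X^j\rangle_s$ as the mesh shrinks. The crucial move is to replace $\Delta X^i_k\Delta X^j_k$ by $\Delta\langle X^i,X^j\rangle_k:=\langle X^i,X^j\rangle_{t_{k+1}}-\langle X^i,X^j\rangle_{t_k}$: writing $Z=\partial^2_{ij}f(X)$ (bounded, adapted), one estimates $\mathbb E\big|\sum_k Z_{t_k}\big(\Delta X^i_k\Delta X^j_k-\Delta\langle X^i,X^j\rangle_k\big)\big|^2$ by expanding the square, controlling the cross terms via the martingale property of $X^i_tX^j_t-\langle X^i,X^j\rangle_t$ and the diagonal terms via the $L^2$-bounds on the increments; this is where finiteness of the quadratic variation is used and is the technical core. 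Once the replacement is justified, $\sum_k\partial^2_{ij}f(X_{t_k})\,\Delta\langle X^i,X^j\rangle_k$ is an ordinary Riemann--Stieltjes sum for the continuous integrand $\partial^2_{ij}f(X_\cdot)$ against a bounded-variation integrator, hence converges to the claimed integral. Finally $\sum_k R_k\to 0$: by uniform continuity of $\partial^2_{ij}f$ and continuity of the path, $\max_k\abs{\partial^2_{ij}f(\xi_k)-\partial^2_{ij}f(X_{t_k})}\to 0$, while $\sum_k\abs{\Delta X^i_k}\abs{\Delta X^j_k}\le\big(\sum_k(\Delta X^i_k)^2\big)^{1/2}\big(\sum_k(\Delta X^j_k)^2\big)^{1/2}$ stays bounded in probability because $\sum_k(\Delta X^i_k)^2\to\langle X^i\rangle_t$. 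Passing to a subsequence along which all these convergences are almost sure yields \eqref{Ito's formula}.

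For the Stratonovich form, by the definition of the Stratonovich integral it remains to identify $\tfrac12\sum_i\langle\partial_i f(X),X^i\rangle_t$ --- that is, $\tfrac12\sum_i$ of the limit of $\sum_k\big(\partial_i f(X_{t_{k+1}})-\partial_i f(X_{t_k})\big)\Delta X^i_k$ --- with $\tfrac12\sum_{i,j}\int_0^t\partial^2_{ij}f(X_s)\,d\langle X^i,X^j\rangle_s$. A first-order Taylor expansion of $\partial_i f\in C^1$ gives $\partial_i f(X_{t_{k+1}})-\partial_i f(X_{t_k})=\sum_j\partial^2_{ij}f(X_{t_k})\,\Delta X^j_k+\abs{\Delta X_k}\,\varepsilon_k$ with $\max_k\abs{\varepsilon_k}\to 0$ by uniform continuity of the second derivatives and path continuity; multiplying by $\Delta X^i_k$, summing, and using once more the quadratic-variation replacement of the previous step together with $\sum_k\abs{\Delta X_k}\,\abs{\Delta X^i_k}=O(1)$ produces the limit $\sum_j\int_0^t\partial^2_{ij}f(X_s)\,d\langle X^j,X^i\rangle_s$. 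Adding $\sum_i\int_0^t\partial_i f(X_s)\,dX^i_s$ to $\tfrac12$ of the sum over $i$ reproduces exactly the right-hand side of the first line, proving the second equality. The principal obstacle throughout is the replacement of squared increments by increments of the quadratic variation; the rest is localization and Riemann-sum bookkeeping.
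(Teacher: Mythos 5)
The paper does not actually prove this proposition: it is stated in the Preliminaries as a classical background fact, with the reader referred to the standard texts cited at the start of Section~2.1, and followed only by an informal remark that the extra second-order term comes from the Taylor expansion and the finiteness of the quadratic variation. Your proof is correct and is exactly the standard argument those references give, and it turns the paper's informal remark into a precise statement: localize by hitting times so that $f$ and its first two derivatives may be taken bounded and uniformly continuous along the path, telescope and Taylor-expand along a partition, identify the first-order sums with the It\^o integral by definition, replace $\Delta X^i_k\Delta X^j_k$ with $\Delta\langle X^i,X^j\rangle_k$ using the $L^2$ estimate coming from the martingale property of $X^iX^j-\langle X^i,X^j\rangle$, kill the Taylor remainder via uniform continuity and the Cauchy--Schwarz bound on $\sum_k\abs{\Delta X^i_k}\abs{\Delta X^j_k}$, and finally unwind the Stratonovich definition by computing $\langle\partial_i f(X),X^i\rangle_t$ with a first-order expansion of $\partial_i f$. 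The only implicit hypothesis you are using, and which you should state, is that the martingale is \emph{continuous} (as it is throughout the paper, where $X_t$ arises as Brownian motion or as a solution of a Stratonovich SDE driven by Brownian motion); without continuity neither the quadratic-variation identification nor the uniform-continuity argument for the remainder goes through, and the formula would acquire jump terms. With that caveat made explicit, your proof is complete and correct.
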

    We see that in \eqref{Ito's formula}, there is an extra second derivative term, which does not appear in the classical calculus. Intuitively, this term corresponds to the second order term in the Taylor series expansion. Since martingales have finite quadratic variations, the sum of the second order terms in the Taylor expansion converges to a non-zero limit, which leads to the extra term in the It\^o's formula.


 Now we can formulate the next
	\begin{defi}
		\label{Diffusion}
		Let $X_t$ be a process on $\mathbb{R}^n$ and $L$ a smooth second order differential operator on $\mathbb{R}^n$. We say $X_t$ is the diffusion generated by $L$, if for any $f\in C_c^2(\mathbb{R}^n)$ the process
		\[ M^f_t:=f(X_t)-f(X_0)+\int_{0}^{t}Lf(X_s)ds  \]   
		is a martingale. Moreover, $L$ is called the generator of $X_t$.
	\end{defi}
	By It\^o's formula \eqref{Ito's formula}, it is easily checked that that Brownian motion is the diffusion generated by $\frac{1}{2}\Delta_{\mathbb{R}^n}$. 
	
	A canonical example of diffusion is provided by stochastic differential equations (SDE) of the form
	\begin{equation}
		\label{SDE_R^n}
		X_t=X_0+\sum_{i=1}^{n}\int_{0}^{t}V_i(X_s)\circ dW^i_s+\int_{0}^{t}V_0(X_s)ds,\ 0\leq t\leq T,\ X_0\in\mathbb{R}^n,
	\end{equation}
	where $\{V_i\}_{0\leq i\leq n}$ are smooth vector fields on $\mathbb{R}^n$ and $W_t$ is a $\mathbb{R}^n$-valued Brownian motion. The following existence and uniqueness result is well-known. 
	\begin{theo}
		\label{Existence and uniqueness in Rn}
		Suppose $\{V_i\}_{0\leq i\leq n}$ are Lipschitz smooth vector fields on $\mathbb{R}^n$. Then for any $T>0$, \eqref{SDE_R^n} admits a unique solution adapted to the Brownian filtration $\mathcal{F}_t$.
	\end{theo}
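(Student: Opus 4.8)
The plan is to run the classical Picard iteration in a suitable Banach space of adapted processes, after first rewriting the Stratonovich equation \eqref{SDE_R^n} in It\^o form. By the definition of the Stratonovich integral together with It\^o's formula applied to $V_i$, one computes $\int_0^t V_i(X_s)\circ dW_s^i = \int_0^t V_i(X_s)\,dW_s^i + \tfrac12\int_0^t (\nabla_{V_i}V_i)(X_s)\,ds$, so that \eqref{SDE_R^n} is equivalent to the It\^o equation $X_t = X_0 + \sum_{i=1}^n \int_0^t V_i(X_s)\,dW_s^i + \int_0^t \widehat V_0(X_s)\,ds$, where $\widehat V_0 := V_0 + \tfrac12\sum_{i=1}^n \nabla_{V_i}V_i$. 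Under the smoothness hypotheses on the $V_i$ the correction term is again globally Lipschitz, so I may fix a common Lipschitz (and linear-growth) constant $L$ for $V_1,\dots,V_n,\widehat V_0$ and work with the It\^o form from now on.

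Next I would set up the fixed-point argument. Let $\mathcal H_T$ be the space of continuous $\mathcal F_t$-adapted processes $Y$ on $[0,T]$ with $\norm{Y}_T^2 := \mathbb E\big[\sup_{0\le t\le T}\abs{Y_t}^2\big] < \infty$, which is a Banach space. Define $X^{(0)}\equiv X_0$ and inductively $X^{(m+1)}_t := X_0 + \sum_i \int_0^t V_i(X^{(m)}_s)\,dW_s^i + \int_0^t \widehat V_0(X^{(m)}_s)\,ds$; each $X^{(m)}$ is adapted since It\^o integrals of adapted integrands are adapted. Applying Doob's $L^2$ maximal inequality and the It\^o isometry to the martingale parts, and the Cauchy--Schwarz inequality to the $ds$-integral, one obtains an estimate of the form
\[
 \mathbb E\Big[\sup_{u\le t}\abs{X^{(m+1)}_u - X^{(m)}_u}^2\Big] \le C_{T,L}\int_0^t \mathbb E\Big[\sup_{u\le s}\abs{X^{(m)}_u - X^{(m-1)}_u}^2\Big]\,ds,
\]
together with the bound $\mathbb E[\sup_{u\le T}\abs{X^{(1)}_u - X_0}^2] \le C'_{T,L} < \infty$ coming from linear growth. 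Iterating the recursion yields $\mathbb E[\sup_{u\le T}\abs{X^{(m+1)}_u - X^{(m)}_u}^2] \le C'_{T,L}\,(C_{T,L}\,T)^m/m!$, which is summable in $m$; hence $(X^{(m)})$ is Cauchy in $\mathcal H_T$ and converges to some $X\in\mathcal H_T$. Along a subsequence the convergence is a.s.\ uniform on $[0,T]$, so $X$ has continuous paths and is adapted, and passing to the limit in the iteration (controlling the stochastic and drift integrals by the same $L^2$ estimates) shows that $X$ solves the It\^o equation, hence \eqref{SDE_R^n}.

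For uniqueness, suppose $X$ and $Y$ both solve \eqref{SDE_R^n}; reducing to the case $X,Y\in\mathcal H_T$ by a stopping-time localization $\tau_N := \inf\{t : \abs{X_t}\vee\abs{Y_t}\ge N\}$ if necessary, the same maximal-inequality/It\^o-isometry estimate gives $h(t) := \mathbb E[\sup_{u\le t}\abs{X_u - Y_u}^2] \le C_{T,L}\int_0^t h(s)\,ds$ with $h$ bounded, so Gronwall's inequality forces $h\equiv 0$; letting $N\to\infty$ shows $X$ and $Y$ are indistinguishable.

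I expect the main obstacle to be technical rather than conceptual: obtaining the $L^2$ estimates with the running supremum $\sup_{u\le t}$ inside the expectation (which requires Doob's maximal inequality, or equivalently the Burkholder--Davis--Gundy inequality, applied to the stochastic-integral terms, and the It\^o isometry in the form $\mathbb E[\int_0^t Z_s^2\,d\langle W^i\rangle_s] = \mathbb E[\int_0^t Z_s^2\,ds]$), and verifying that the It\^o--Stratonovich correction $\tfrac12\sum_i \nabla_{V_i}V_i$ inherits the global Lipschitz property from the hypotheses on the $V_i$. Once these are in place, the contraction estimate, the factorial summation, and the Gronwall uniqueness argument are routine.
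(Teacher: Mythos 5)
Your Picard-iteration argument is correct and is the standard route; note that the paper itself does not prove this theorem but records it as a classical fact, so there is no internal proof to compare against. The one point deserving more care is the regularity of the It\^o--Stratonovich correction $\widehat V_0 := V_0 + \tfrac12\sum_i \nabla_{V_i}V_i$: if each $V_i$ is smooth and globally Lipschitz, then $\nabla V_i$ is bounded while $V_i$ has linear growth, so $\nabla_{V_i}V_i = \sum_j V_i^j\,\partial_j V_i$ has linear growth but need \emph{not} be globally Lipschitz (the product of a linear-growth function with a bounded one is generally not Lipschitz). Your iteration as written relies on a global Lipschitz constant for $\widehat V_0$, so this is a small but real gap. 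It is easy to repair in either of two ways: strengthen the hypothesis to ``all derivatives bounded'' (harmless here, since in the paper's application the $V_i$ are extensions of vector fields on a compact embedded manifold and may be taken compactly supported), or keep only local Lipschitz plus linear growth and run the contraction estimate up to the stopping times $\tau_N$ you already introduced for uniqueness, then let $N\to\infty$ using the linear-growth a priori $L^2$ bound. With that adjustment the Doob/It\^o-isometry recursion, the factorial summation, and the Gronwall uniqueness step all go through as you wrote them.
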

	In addition, the generator of $X_t$ defined above is given by the H\"ormander type operator $L=\frac{1}{2}\sum_{i=1}^{n}V_i^2+V_0$.
	
	Diffusion are also (strong) Markov processes. More specifically, we have
	\[ \mathbb{E}^{X_0}(X_{s+t} \mid \mathcal{F}_s)= \mathbb{E}^{X_s}(X_{t} ),\ \forall s,t\geq 0 \]
	Here the sup-scripts are used to indicate the starting points. We also emphasis that in the above equation $s,t$ can be replaced by stopping times adapted to the underlying filtration. Intuitively, it says that to predict the behavior of $X_{s+t}$ with the knowledge of $\{X_r,\ 0\leq r\leq s \}$ is the same as doing the prediction with knowledge of only $X_s$. In other words, for $X_t$, given the current position, the past and the future are independent. The Markov property of diffusion allows one to construct semi-groups from $X_t$. More specifically, for any bounded function $f\in C_c(\mathbb{R}^n)$
	\[ P_tf(x)=\mathbb{E}^x(f(X_t)) \] 
	is a strong continuous semi-group, whose generator coincides with the generator of $X_t$.
	\subsection{Coupling of diffusion in Euclidean space}
	In this section, we review some basic facts of the coupling method of diffusion in Euclidean space. Let $Z_t$ be a diffusion generated by a given elliptic operator $L$. We have the following
	\begin{defi}
		A couple diffusion $(X_t,Y_t)\in \mathbb{R}^n\times \mathbb{R}^n\cong \mathbb{R}^{2n}$ is said to be a coupling of $Z_t$, if both $X_t$ and $Y_t$ are diffusion generated by $L$.
	\end{defi}
	\begin{rema}
		This definition might look weird at first since one would have thought that we should have $(X_t,Y_t)=(Z_t,Z_t)$. However, the uniqueness we claimed for $Z_t$ is in the sense of probability distribution. In other words, if $X_t,Y_t,Z_t$ have the same starting point, then for any Borel set $B\in \mathbb{R}^n$, we always have
		\[ \mathbb{P}(X_t\in B)=\mathbb{P}(Y_t\in B)=\mathbb{P}(Z_t\in B),\ \forall t>0  \]
		But this does not imply $X_t=Y_t=Z_t$. We can give a simple example. Let $W_t$ be a standard Brownian motion on $\mathbb{R}^1$, then by symmetry $(W_t, -W_t)$ is a coupling of real Brownian motion. But clearly, $W_t$ and $-W_t$ are not identical.
	\end{rema} 
	
	For our purpose, we will consider
	\begin{align*}
		X_t&=X_0+\sum_{i=1}^{n}\int_{0}^{t}V_i(X_s)\circ dW^i_s+\int_{0}^{t}V_0(X_s)ds\\
		Y_t&=Y_0+\sum_{i=1}^{n}\int_{0}^{t}V_i(Y_s)\circ dB^i_s+\int_{0}^{t}V_0(Y_s)ds, 
	\end{align*}
	where $(W_t, B_t)$ is a coupling of $\mathbb{R}^n$ Brownian motions. Obviously, $(X_t, Y_t)$ is a coupling of diffusion generated by $L=\frac{1}{2}\sum_{i=1}^{n}V_i^2+V_0$. So, each coupling $(W_t, B_t)$ of $\mathbb{R}^n$ Brownian motions gives us a coupling $(X_t, Y_t)$. A special type of coupling that has attracted a lot of attentions is the so called Kendall-Cranston  coupling, or reflection coupling, which was formally introduced in \cite{MR1120916} and we now introduce. 
	
	We consider the orthogonal matrix
	\[m'(x,y)=I_n-2\frac{(x-y)(x-y)^*}{\abs{x-y}^2},\ x,y\in \mathbb{R}^n, x\neq y.   \]
	Multiplying $m'(x,y)$ with an vector $v$ gives the reflection of $v$ across the hyperplane perpendicular to the line segment that connects $x$ and $y$.
	
	Let $W_t$ be a standard $\mathbb{R}^n$ Brownian motion, and define
	\begin{align}
		X_t&=X_0+\sum_{i=1}^{n}\int_{0}^{t}V_i(X_s)\circ dW^i_s+\int_{0}^{t}V_0(X_s)ds  \label{X_t} \\
		Y_t&=Y_0+\sum_{i=1}^{n}\int_{0}^{t}V_i(Y_s)\circ dB^i_s+\int_{0}^{t}V_0(Y_s)ds\\
		B_t&=\int_{0}^{t}m'(X_t, Y_t) dW_t.  \label{B_t} 
	\end{align}
	Since $m'(X_s,Y_s)$ is orthogonal, by Levy's characterization, $B_t$ is another Brownian motion adapted to $\sigma(W_t)$. Hence, the above-defined $(X_t, Y_t)$ is a coupling. 
We will refer to $\tau:=\inf\{t\geq 0:\ X_t=Y_t \}$ as the coupling time. Now, the complete reflection coupling is given by \eqref{X_t} - \eqref{B_t} and $X_t=Y_t, \forall t\geq \tau.$
	Notice that we set $X_t=Y_t$ after the coupling time. By the strong Markov property, this does not change the generator of $X_t$ and $Y_t$. The fact that these two diffusion stick together when $t$ is sufficiently large is crucial to our analysis later.
	\subsection{Diffusion on Riemannian manifolds}
	Let $M^n$ be a smooth compact Riemannian manifold of dimension $n>1$ without boundary. We first make sense of SDE of the form
	\begin{equation}
		\label{SDE on manifold}
		X_t=X_0+\sum_{i=1}^{n}\int_{0}^{t}V_i(X_s)\circ dW^i_s+\int_{0}^{t}V_0(X_s)ds,\ 0\leq t\leq T,\ X_0\in M^n,
	\end{equation}
	where $\{V_i\}_{0\leq i\leq n}$ are smooth vector fields on $M^n$ and $W_t$ is a $\mathbb{R}^n$-valued Brownian motion.
	\begin{defi}
		An adapted process $X_t$ is said to be the solution to \eqref{SDE on manifold} if for any $f\in C^\infty(M^n)$, we have
		\[ f(X_t)=f(X_0)+\sum_{i=1}^{n}\int_{0}^{t}V_if(X_s)\circ dW^i_s+\int_{0}^{t}V_0f(X_s)ds,\ 0\leq t\leq T. \]
	\end{defi}
	By Whitney's embedding Theorem, $M^n$ can be smoothly embeded into $\mathbb{R}^{2n}$ as a closed subset. We can also extend $\{V_i\}_{0\leq i\leq n}$ to smooth vector fields $\{\hat{V}_i\}_{0\leq i\leq n}$ on $\mathbb{R}^{2n}$. The existence and uniqueness of solution to \eqref{SDE on manifold} is guaranteed by Theorem \ref{Existence and uniqueness in Rn} and the observation that $X_t$ as a process in $\mathbb{R}^{2n}$ stays in $M^n$. 
	
	Diffusion on Riemannian manifolds is defined as a natural extension of the Definition~\ref{Diffusion}.
	\begin{defi}
		Let $X_t$ be a process on $M^n$ and $L$ a smooth second order differential operator on $M^n$. We say $X_t$ is the diffusion generated by $L$, if for any $f\in C^2(M^n)$ the process
		\[ M^f_t:=f(X_t)-f(X_0)+\int_{0}^{t}Lf(X_s)ds  \]   
		is a martingale.  
	\end{defi}
	We are ready for the next
	\begin{defi}
		A process $X_t$ on $M^n$ is said to be a Brownian motion if the generator of $X_t$ is $\frac{1}{2}\Delta$, where $\Delta$ is the Laplace-Beltrami operator on $M^n$.
	\end{defi}
	In order to take full advantage of stochastic calculus, we would like to write a Brownian motion on $M^n$ as the solution to a SDE on $M^n$ (note that this task is trivial on $\mathbb{R}^n$). Thus, it is desirable to write the Laplace-Beltrami operator as a H\"ormander type operator $L=\frac{1}{2}\sum_{i=1}^{n}V_i^2+V_0$. However, there is no intrinsic way of achieving this on a general Riemannian manifold. We will employ the Eells-Elworthy-Malliavin approach (see chapter 3 of \cite{Hsu}), which we now describe. 
	
	Let $O(M^n)$ be the orthonormal frame bundle of $M^n$ and $\pi:O(M^n)\rightarrow M^n$ the canonical projection. For any $u\in O(M^n)$, we denote $H_{e_i}(u)=H_i(u)$ the horizontal vector field such that $\pi_*H_i(u)=u\cdot e_i$, 
where $\{e_i\}_{1\leq i\leq n}$ is a basis of $\mathbb{R}^n$. Bochner's horizontal Laplacian on $O(M^n)$ is defined as $\Delta_{O(M^n)}=\sum_{i=1}^{n}H^2_i$. In particular, we have
	\begin{theo}
		Let $f\in C^\infty(M^n)$ and $\Tilde{f}=f\circ \pi$ its lift to {$O(M^n)$}, then for any $u\in O(M^n)$
		\[\Delta_{O(M^n)}\Tilde{f}(u)=\Delta f(\pi(u)).  \]
	\end{theo}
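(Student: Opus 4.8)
The plan is to argue pointwise, reducing the identity to the standard description of the fundamental horizontal vector fields in terms of parallel transport along geodesics. Fix $u\in O(M^n)$ and write $x=\pi(u)$, regarding $u$ as a linear isometry $u\colon\mathbb{R}^n\to T_xM^n$, so that $\{ue_i\}_{1\le i\le n}$ is an orthonormal basis of $T_xM^n$. The first step is to identify, for each $i$, the integral curve $u_i(t)$ of $H_i$ with $u_i(0)=u$: I claim it is the horizontal lift through $u$ of the geodesic $\gamma_i$ determined by $\gamma_i(0)=x$, $\gamma_i'(0)=ue_i$. Indeed, let $u_i(t)$ be the parallel transport of the frame $u$ along $\gamma_i$, which is by definition the horizontal lift of $\gamma_i$ through $u$; then $\dot u_i(t)$ is horizontal and $\pi_*\dot u_i(t)=\gamma_i'(t)$, while $\gamma_i'(t)$, being parallel along the geodesic $\gamma_i$, equals $u_i(t)e_i$. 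Hence $\dot u_i(t)$ is the horizontal vector at $u_i(t)$ projecting to $u_i(t)e_i$, which by the defining property of $H_i$ means $\dot u_i(t)=H_i(u_i(t))$.

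With this in hand I would compute the two derivatives directly. Using $\Tilde f=f\circ\pi$ and $\pi(u_i(t))=\gamma_i(t)$,
\[
(H_i\Tilde f)(u_i(t))=\frac{d}{ds}\Big|_{s=0}f(\gamma_i(t+s))=\langle\nabla f(\gamma_i(t)),\gamma_i'(t)\rangle .
\]
Differentiating this once more along $u_i(t)$ at $t=0$, using metric compatibility together with $\nabla_{\gamma_i'}\gamma_i'=0$,
\[
(H_i^2\Tilde f)(u)=\frac{d}{dt}\Big|_{t=0}\langle\nabla f(\gamma_i(t)),\gamma_i'(t)\rangle=\langle\nabla_{ue_i}\nabla f,ue_i\rangle=\Hess f\,(ue_i,ue_i).
\]
Summing over $i$ and using that $\{ue_i\}$ is orthonormal in $T_xM^n$,
\[
\Delta_{O(M^n)}\Tilde f(u)=\sum_{i=1}^{n}(H_i^2\Tilde f)(u)=\sum_{i=1}^{n}\Hess f\,(ue_i,ue_i)=\tr\Hess f(x)=\Delta f(\pi(u)),
\]
which is the assertion.

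The only step that is not a routine computation is the first one — that the integral curves of the $H_i$ are horizontal lifts of geodesics. This rests on standard facts about principal connections, namely that the horizontal lift of a curve to $O(M^n)$ is the parallel transport of frames and that $\pi_*$ restricts to an isomorphism from each horizontal subspace onto the corresponding tangent space of $M^n$; I would either invoke this from Chapters~2--3 of \cite{Hsu} or include the short verification sketched above. An alternative is to write $H_i$ explicitly in bundle coordinates $(x^k,e^k_a)$ via the Christoffel symbols and expand $\sum_i H_i^2(f\circ\pi)$ by hand, but this is messier and less transparent, so I would prefer the geodesic argument.
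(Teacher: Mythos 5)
Your proof is correct: identifying the integral curves of $H_i$ with horizontal lifts of geodesics (i.e.\ parallel transport of the frame along $\gamma_i$), then computing $H_i^2\tilde f$ as $\Hess f(ue_i,ue_i)$ and summing over the orthonormal frame, is exactly the standard argument. The paper itself gives no proof of this statement — it is quoted as a known fact from the Eells--Elworthy--Malliavin framework (Chapter~3 of \cite{Hsu}), and your argument is precisely the one found there, so nothing further is needed.
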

	With Bochner's horizontal Laplacian, a horizontal Brownian motion on $O(M^n)$ can be written as the solution to 
	\begin{equation*}
		\alpha_t=\alpha_0+\sum_{i=1}^{n}\int_{0}^{t}H_i(\alpha_s)\circ d B^i_s,\ \alpha_0\in O(M^n).
	\end{equation*}
	Its projection $X_t=\pi(\alpha_t)$ is a Brownian motion on $M^n$ starts at $\pi(\alpha_0)$. 
 
	\subsection{Coupling of Brownian motions on Riemannian manifolds}
	The definition of coupling on Riemannian manifold is exactly the same as on $\mathbb{R}^n$ as we introduced before. In order to construct a reflection coupling of Brownian motions on $M^n$, we only need to replace the orthogonal matrix $m'(x,y)$ by the so called mirror map $m(x,y):T_xM^n\rightarrow T_yM^n$. For each $w\in T_xM^n$, $m(x,y)(w)$ is obtained by first parallel transport $w$ along the unique minimizing geodesic from $x$ to $y$, then reflect the resulting vector with respect to the hyperplane perpendicular to the geodesic at $y$. Clearly, $m(x,y)$ is an isometry and generalization of $m'(X,Y)$ that we used in $\mathbb{R}^n$. It should be pointed out that if $x,y$ are conjugate points then $m(x,y)$ is undefined. This issue could be handled by an extension method (see \cite[Theorem 6.62]{Hsu}).
	
	The reflection coupling of Brownian motions on $M^n$ is defined as
	\begin{align*}
		d U_t&=\sum_{i=1}^{n}H_i(U_t)\circ d B^i_t,\ \ \ \ \  \ \ \ \ \ \ 
		d V_t=\sum_{i=1}^{n}H_i(V_t)\circ d W^i_t,  \\
		X_t&=\pi(U_t), \ \ \ \ \ \ 
		Y_t=\pi(V_t),\ \ \ \ \ 
		dW_t=V_tm(X_t,Y_t)U_t^{-1}dB_t.
	\end{align*}
	
	\subsection{Boundary control of solutions of heat equation} 
	
    We recall from \cite[Lemma 3.4]{MR3960269} a general near boundary control of positive functions on a convex domain that vanish on the boundary .
	
	\begin{prop}\label{prop:key}
 Let $\Omega$ be a uniformly convex bounded domain in a Riemannian manifold $M^n$,
and $u:  \overline{ \Omega}  \times \mathbb{R}_+ \to \mathbb{R}$ a $C^2$ function such that $u$ is positive on $\Omega$, $u(\cdot, t) = 0$ and  $\nabla u  \neq 0$ on $\partial \Omega$.   Given $T<\infty$,  there exists $r_1> 0$ such that $\Hess \log u|_{(x,t)} < 0$ whenever $d(x,\partial \Omega) < r_1$ and $t\in [0,T]$, and $C \in\mathbb{R}$ such that $\Hess\, \log u|_{(x,t)}(v,v) \leq C \|v\|^2$ for all $x \in \Omega$ and $t \in [0,T]$.
	\end{prop}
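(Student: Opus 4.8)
The plan is to analyse $\Hess\log u$ in a one-sided tubular neighbourhood of $\partial\Omega$ and to exploit the competition between the always-negative rank-one term and the remaining term in the identity
\[
\Hess\log u \;=\; \frac{1}{u}\,\Hess u \;-\; \frac{1}{u^{2}}\,du\otimes du ,
\]
valid wherever $u>0$ (all derivatives taken in the $M^{n}$-variable, with $t$ a parameter). Since $\overline\Omega\times[0,T]$ is compact and $u\in C^{2}$ there, fix constants $M,C_{0}>0$ with $\|\Hess u\|\le M$ and $|\nabla u|\le C_{0}$ throughout; since $u(\cdot,t)\equiv0$ on $\partial\Omega$, integrating $|\nabla u|$ along a shortest path to $\partial\Omega$ gives $0<u(x,t)\le C_{0}\,d(x,\partial\Omega)$ for $x\in\Omega$; and since $\nabla u\neq0$ on the compact set $\partial\Omega\times[0,T]$, continuity yields $r_{0}>0$ and $\delta_{1}>0$ with $|\nabla u|\ge\delta_{1}$ whenever $d(x,\partial\Omega)<r_{0}$, $t\in[0,T]$. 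On this region $\nabla u$ never vanishes, so every $v\in T_{x}M^{n}$ splits as $v=v^{\top}+v^{\perp}$ with $v^{\perp}$ parallel to $\nabla u(x)$ and $v^{\top}\perp\nabla u(x)$.

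\textbf{Convexity input.} For $x\in\partial\Omega$ and $v\in T_{x}\partial\Omega$, using that $u(\cdot,t)$ vanishes on $\partial\Omega$ and hence $\nabla u(x,t)=|\nabla u|\,N_{\mathrm{in}}$ there, one computes $\Hess u|_{(x,t)}(v,v)=-|\nabla u|\,\mathrm{II}_{\partial\Omega}(v,v)\le-\delta_{1}\kappa_{0}|v|^{2}$, where $\kappa_{0}>0$ is a uniform lower bound for the second fundamental form of $\partial\Omega$ provided by uniform convexity. The function $(x,t,v)\mapsto\Hess u|_{(x,t)}(v,v)$ is continuous on the compact set $\{(x,t,v):d(x,\partial\Omega)\le r_{0},\,t\in[0,T],\,v\perp\nabla u(x),\,|v|=1\}$, so after shrinking $r_{0}$ to some $r_{1}\in(0,r_{0}]$ we obtain $\Hess u|_{(x,t)}(v^{\top},v^{\top})\le-\tfrac{1}{2}\delta_{1}\kappa_{0}\,|v^{\top}|^{2}$ whenever $d(x,\partial\Omega)<r_{1}$, $t\in[0,T]$.

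\textbf{Combination and global bound.} Fix such $(x,t)$ and a unit vector $v$, and set $\cos\theta=\langle v,\nabla u\rangle/|\nabla u|$, so $|v^{\perp}|=|\cos\theta|$, $|v^{\top}|=|\sin\theta|$. Expanding $\Hess u(v,v)$ into its tangential, mixed, and normal blocks, estimating the first by the previous step and the other two by $2M|\sin\theta\cos\theta|$ and $M\cos^{2}\theta$, and using $|\nabla u|\ge\delta_{1}$, multiplication of the identity by $u^{2}>0$ shows that $\Hess\log u|_{(x,t)}(v,v)<0$ follows from
\[
u\,\Big(-\tfrac{1}{2}\delta_{1}\kappa_{0}\sin^{2}\theta+2M|\sin\theta\cos\theta|+M\cos^{2}\theta\Big)\;<\;\delta_{1}^{2}\cos^{2}\theta .
\]
This is handled by two cases: if $|\cos\theta|<\epsilon$ with $\epsilon=\epsilon(\delta_{1},\kappa_{0},M)$ small enough that the parenthesis is $\le-\tfrac14\delta_{1}\kappa_{0}$, the left side is negative; if $|\cos\theta|\ge\epsilon$, the left side is at most $2M\,u\le2MC_{0}r_{1}$, which is $<\delta_{1}^{2}\epsilon^{2}\le\delta_{1}^{2}\cos^{2}\theta$ once $r_{1}<\delta_{1}^{2}\epsilon^{2}/(2MC_{0})$. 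This proves $\Hess\log u|_{(x,t)}<0$ on $\{d(x,\partial\Omega)<r_{1}\}\times[0,T]$. For the second assertion, $K:=\{x\in\overline\Omega:d(x,\partial\Omega)\ge r_{1}\}\times[0,T]$ is a compact subset of $\Omega\times[0,T]$ on which $u\ge\delta>0$, so $\log u\in C^{2}$ there and $\Hess\log u(v,v)\le C'\|v\|^{2}$ for some $C'$; on the complement $\Hess\log u(v,v)<0$ by the first part; hence $C:=\max\{C',0\}$ works.

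\textbf{Main obstacle.} The only delicate point is the convexity step. The term $u^{-1}\Hess u$ blows up like $u^{-1}$, while the favourable term $-u^{-2}\,du\otimes du$ blows up like $u^{-2}$ but only in the $\nabla u$-direction; in directions tangent to the near-boundary level sets of $u$ the favourable term is absent, so one genuinely needs $\Hess u$ to be strictly negative there. Uniform convexity of $\Omega$ supplies exactly this, through the identity $\Hess u|_{\partial\Omega}=-|\nabla u|\,\mathrm{II}_{\partial\Omega}$ together with the compactness/continuity argument above; the rest is elementary estimation.
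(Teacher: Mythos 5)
Your proof is correct; note that the paper itself gives no proof of this proposition but simply recalls it as Lemma 3.4 of \cite{MR3960269}. Your argument---multiplying $\Hess\log u = u^{-1}\Hess u - u^{-2}\,du\otimes du$ by $u^{2}$, using $\Hess u = -|\nabla u|\,\mathrm{II}$ on $\partial\Omega$ together with uniform convexity and a compactness argument to make the tangential Hessian strictly negative in a collar, and letting the $|\nabla u|^{2}\cos^{2}\theta$ term absorb the mixed and normal contributions via $u\le C_{0}\,d(\cdot,\partial\Omega)$---is essentially the standard proof of that cited lemma, so it matches the intended approach.
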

	
		We will also need the next
		\begin{prop}\label{Solutions to heat equations}
			Let $\Omega$ be a bounded strictly convex domain in a Riemannian manifold with smooth boundary $\partial \Omega$. Denote by $\rho_{\partial \Omega}: \bar{\Omega} \rightarrow \mathbb{R}_{+}$the distance function to the boundary $\partial \Omega$, and $N$ the unit inward normal vector field on $\partial \Omega$.
			Let $u_{0} \in C^{\infty}(\bar{\Omega})$ be positive in $\Omega$ such that $u_{0}=0$ and $\nabla u_{0} \neq 0$ on $\partial \Omega$. Let $u: \mathbb{R}_{+} \times \bar{\Omega} \rightarrow \mathbb{R}_{+}$be a smooth solution to
			$$
			\begin{aligned}
				\frac{\partial u}{\partial t} & =(\Delta-V) u \quad \text { in } \mathbb{R}_{+} \times \Omega ; \\
				u & =0 \quad \text { on } \mathbb{R}_{+} \times \partial \Omega \text { and } u(0, \cdot)=u_{0} \text { in } \bar{\Omega} .
			\end{aligned}
			$$
			Then the solution $u$ verifies that
			
			(1) for any $T>0, \theta_{T}:=\inf _{[0, T] \times \partial \Omega}|\nabla u|>0$;
			
			(2) for every $x \in \partial \Omega, \nabla u(t, x)=|\nabla u(t, x)| N(x)$; 
			
			(3) $\lim _{\rho_{\partial \Omega}(x) \rightarrow 0} \frac{u(t, x)}{|\nabla u(t, x)| \rho_{\partial \Omega}(x)}=1$ uniformly in $t \in[0, T]$;
			
			(4) for any $T>0$, there exists $C_{T} \geq 0$ such that $\left.\Hess \log u\right|_{(t, x)}(y, y) \leq C_{T}|y|^{2}$ for all $t \in[0, T], x \in \Omega$ and $y \in 	T{\Omega}$ tangent vector at $x$. 
			
		\end{prop}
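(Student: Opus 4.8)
The plan is to establish the four assertions in sequence: parts (1) and (2) from the parabolic strong maximum principle and Hopf's boundary lemma; part (3) from a normal Taylor expansion near $\partial\Omega$ together with uniform $C^2$ bounds; and part (4) by invoking Proposition~\ref{prop:key}.

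For (1) and (2), I would first note that $V$, being continuous on the compact set $\bar\Omega$, is bounded, so $u$ solves a linear uniformly parabolic equation with bounded zeroth-order term and nonnegative, not-identically-zero initial data; the parabolic strong maximum principle then gives $u(t,x)>0$ for every $x\in\Omega$ and $t>0$, while $u(0,\cdot)=u_0>0$ in $\Omega$ by hypothesis. Smoothness of $\partial\Omega$ and strict convexity of $\Omega$ guarantee an interior ball condition at every boundary point, so Hopf's lemma (in its parabolic form for $t>0$, and via the hypothesis on $u_0$ at $t=0$) gives that the inward normal derivative $\langle\nabla u(t,x),N(x)\rangle$ is strictly positive at every $(t,x)\in[0,T]\times\partial\Omega$. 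Since $u(t,\cdot)\equiv 0$ on $\partial\Omega$ for all $t$, every derivative of $u$ tangent to $\partial\Omega$ vanishes there, so $\nabla u(t,x)$ is a nonnegative multiple of $N(x)$; combined with the Hopf bound this is $\nabla u(t,x)=|\nabla u(t,x)|N(x)$, which is (2). Then $(t,x)\mapsto|\nabla u(t,x)|$ is continuous and, by the above, strictly positive on the compact set $[0,T]\times\partial\Omega$, hence bounded below by some $\theta_T>0$, which is (1).

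For (3), I would fix $T$ and use smoothness of $u$ up to the boundary together with compactness of $[0,T]\times\bar\Omega$ to obtain a bound $M_T$ on $\|u\|_{C^2([0,T]\times\bar\Omega)}$. In a tubular neighborhood of $\partial\Omega$ each point $x$ has a unique nearest boundary point $x'$, with $\rho_{\partial\Omega}(x)=d(x,x')$ and the unit-speed minimizing geodesic from $x'$ to $x$ having initial velocity $N(x')$. Expanding $s\mapsto u(t,\exp_{x'}(sN(x')))$ to first order at $s=0$ and using $u(t,x')=0$ and part (2) gives $u(t,x)=\rho_{\partial\Omega}(x)\,|\nabla u(t,x')|+O(M_T\,\rho_{\partial\Omega}(x)^2)$, while $|\nabla u(t,x)|=|\nabla u(t,x')|+O(M_T\,\rho_{\partial\Omega}(x))$. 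Dividing and using the lower bound $\theta_T$ of part (1) shows the quotient tends to $1$ as $\rho_{\partial\Omega}(x)\to 0$, uniformly in $t\in[0,T]$ since all error terms are controlled by $M_T$ and $\theta_T$.

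Finally, for (4): for each $T<\infty$ the function $u$, restricted to $\bar\Omega\times[0,T]$, is $C^2$, positive on $\Omega$ (by the strong maximum principle together with the hypothesis on $u_0$), vanishes on $\partial\Omega$ for every $t$, and has $\nabla u\neq 0$ on $\partial\Omega$ by part (1); these are exactly the hypotheses of Proposition~\ref{prop:key}, whose conclusion includes a constant $C_T$ with $\Hess\log u|_{(t,x)}(y,y)\le C_T|y|^2$ for all $t\in[0,T]$, $x\in\Omega$, and tangent vectors $y$ at $x$ — which is precisely (4), and also yields the strict negativity of $\Hess\log u$ in a boundary collar that will be needed later. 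I expect the only genuinely delicate point to be the precise form of the strong maximum principle and Hopf's lemma for $\partial_t-\Delta+V$ up to $t=0$; the rest is bookkeeping with the uniform $C^2$ bounds, and the Hessian estimate is imported directly from Proposition~\ref{prop:key}.
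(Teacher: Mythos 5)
Your proposal is correct and takes essentially the same route as the paper: maximum principle/Hopf-type positivity of the inward normal derivative for (1)--(2), the nearest-boundary-point (tubular neighborhood) expansion of $u$ along the normal for (3), and a direct appeal to Proposition~\ref{prop:key} for (4). You merely spell out the standard details (absorbing the bounded potential before applying the parabolic strong maximum principle and Hopf lemma, and the Taylor expansion with uniform $C^2$ bounds) that the paper's terse proof leaves implicit.
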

		\begin{proof}
			The first statement is a consequence of maximum principle for an elliptic operator and non-constancy of $u_0$. Since $u$ behaves as a level set on $ \mathbb{R}_{+} \times \partial \Omega $, the second statement follows. The third statement follows from the following identification: for $r>0$, we write $\partial_{r} \Omega=\left\{x \in \Omega: \rho_{\partial \Omega}(x) \leq r\right\}$ for the $r$-neighborhood of $\partial \Omega$. As \cite{MR2158015}, there exists $r_{0} \in(0, D / 2)$ such that $\rho_{\partial \Omega}$ is smooth on $\partial_{r_{0}} \Omega$. Then for any $x \in \partial_{r_{0}} \Omega$, there exists a unique $x^{\prime} \in \partial \Omega$ such that $\rho_{\partial \Omega}(x)=\left|x-x^{\prime}\right|$ (meaning that the minimal geodesic distance between $x$ and $x'$) and $\nabla \rho_{\partial \Omega}(x)=N\left(x^{\prime}\right)$. In particular, $\nabla \rho_{\partial \Omega}=N$ on the boundary $\partial \Omega$. The last statement follows from Proposition~\ref{prop:key}. 
		\end{proof}

		\subsection{Jacobi fields and Variations in the Model Spaces }
  Here we recall the notations and variations from \cite{MR3960269} which we will use later. 
  
  Let $\mathbb{M}_{K}^{n}$ be the  $n$-dimensional simply connected manifold with constant sectional curvature $K$. 
 Let $\operatorname{sn}_{K}(s)$ be the solution of
	$$
	\operatorname{sn}_{K}^{\prime \prime}(s)+K \operatorname{sn}_{K}(s)=0, \quad \operatorname{sn}_{K}(0)=0, \quad \operatorname{sn}_{K}^{\prime}(0)=1, 
	$$
	$\operatorname{cs}_{K}(s)=\operatorname{sn}_{K}^{\prime}(s)$,  and $\operatorname{tn}_{K}(s)=K \frac{\operatorname{sn}_{K}(s)}{\operatorname{cs}_{K}(s)}=-\frac{\operatorname{cs}_{K}^{\prime}(s)}{\operatorname{cs}_{K}(s)}$. 
 Explicitly we have
	
	$$
	\begin{gathered}
		\operatorname{sn}_{K}(s)= \begin{cases}\frac{1}{\sqrt{K}} \sin (\sqrt{K} s), & K>0 \\
			s, & K=0 \\
			\frac{1}{\sqrt{-K}} \sinh (\sqrt{-K} s) & K<0\end{cases} \\
		\operatorname{cs}_{K}(s)= \begin{cases}\cos (\sqrt{K} s), & K>0 \\
			1, & K=0 \\
			\cosh (\sqrt{-K} s), & K<0\end{cases}
	\end{gathered}
	$$
	
	and
	
	\begin{equation}
	\operatorname{tn}_{K}(s)= \begin{cases}\sqrt{K} \tan (\sqrt{K} s), & K>0 \\ 0, & K=0 \\ -\sqrt{-K} \tanh (\sqrt{-K} s). & K<0\end{cases}  \label{tn_k}
	\end{equation} 
	
		Let $x_0 \neq y_0$ in $\Omega$ with $\rho(x_0,y_0) =d_0$. Let $\gamma(s): \left[-\tfrac{d_{0}}{2}, \tfrac{d_{0}}{2}\right] \mapsto \Omega$ be the normal minimal geodesic connecting $x_0$ to $y_0$.   Associated to a vector $v \oplus w \in T_{x_{0}} \Omega \oplus T_{y_{0}} \Omega$, we use the variation $\eta(r, s)$ from \cite[Page 362]{MR3960269}.  Let $\sigma_{1}(r)$ be the geodesic with $\sigma_{1}(0)=x_{0}, \frac{\partial}{\partial r} \sigma_{1}(0)=v, \sigma_{2}(r)$ be the geodesic with $\sigma_{2}(0)=y_{0}, \frac{\partial}{\partial r} \sigma_{2}(0)=w$, and $\eta(r, s), s \in\left[-\frac{d_{0}}{2}, \frac{d_{0}}{2}\right]$, be the minimal geodesic connecting $\sigma_{1}(r)$ and $\sigma_{2}(r)$, with $\eta(0, s)=\gamma(s)$. Namely $\eta(r, s)=\exp _{\sigma_{1}(r)} s V(r)$ for some $V(r)$. Since we are in a strictly convex domain, every two points are connected by a unique minimal geodesic, the variation $\eta(r, s)$ is smooth. Denote the variation field $\frac{\partial}{\partial r} \eta(r, s)$ by $J(r, s)$. Then $J(r, s)$ is the Jacobi field along $s$ direction satisfying $J\left(r,-\frac{d_{0}}{2}\right)=v, J\left(r, \frac{d_{0}}{2}\right)=w$. Denote $J(s)=J(0, s)$. Note that with this parametrization, in general, for fixed $r, \eta(r, s)$ is not unit speed when $s \neq 0$. 

 Denote  the unit vector of $\frac{\partial}{\partial s} \eta(r, s)$ by \begin{equation}
    T(r, s)=\frac{\eta^{\prime}}{\left\|\eta^{\prime}\right\|}. \label{T} 
 \end{equation}

		
		
		For space with constant sectional curvature $\mathbb{M}_{K}^{n}$, it is shown in \cite{MR3960269} that
            \begin{align}
            &\left.\nabla_{r} T(r, s)\right|_{r=0}=J^{\prime}(s), \label{Jacobi field first derivative}\\
            &\left.\nabla_{r} \nabla_{r} T\right|_{r=0}=-\left\|J^{\prime}(s)\right\|^{2} e_{n}. \label{Jacobi field second derivative}
            \end{align}

Let $\{e_i\}, i = 1, \dots, n$ be an orthonormal basis of $T_{x_0}$ with $e_n = \gamma'(0)$, parallel translate $e_i$ along $\gamma$ to $y_0$. 
  In $\mathbb M^n_k$, we have explicit formulas for the Jacobi fields with given boundary values. 
  For each $1 \leq i \leq n-1$, with boundary values $0$ and $ e_{i}$, the Jacobi fields are $Q_{i}(s)=\frac{\operatorname{sn}_{K}\left(\frac{d_{0}}{2}+s\right)}{\operatorname{sn}_{K}\left(d_{0}\right)} e_{i}(s)$; for boundary values $e_{i}$ and $0$, $Q_{i}(s)=\frac{\operatorname{sn}_{K}\left(-s+\frac{d_{0}}{2}\right)}{\operatorname{sn}_{K}\left(d_{0}\right)} e_{i}(s)$;  and for  $e_{i}$ and $e_i$,  $Q_{i}(s)=\frac{\operatorname{cs}_{K}\left(s\right)}{\operatorname{cs}_{K}\left(\frac{d_{0}}{2}\right)} e_{i}(s)$.

		\section{Diffusion couplings and related properties}
		In this section, we introduce two different couple diffusion and derive some useful properties for our use later. We first consider the following  $(X_t, Y_t)$ on the product manifold $M^n\times M^n$. This coupling will be used to prove the gap comparison. This coupling was also used in the Euclidean case in \cite{MR3489850}. We consider
		\begin{align} 
			&dU_t=\sum_{i=1}^{n}\sqrt{2}H_{e_i}(U_t)\circ dB^i_t+2H_{\nabla\log \phi_1}(U_t)dt,  \label{couping1} \\
			&dV_t=\sum_{i=1}^{n}\sqrt{2}H_{e_i}(V_t)\circ dW^i_t+2H_{\nabla\log \phi_1}(V_t)dt,\\
			&dW_t=V_tm_{X_tY_t}U_t^{-1} dB_t,\\
			&X_t=\pi(U_t),\ \ \ \ \ \ Y_t=\pi(V_t), \label{coupling2}
		\end{align} 
		where $m(X_t,Y_t)$ is the mirror map defined in Section 2.4 and $H_{e_i}$ is the horizontal vector field we defined in section 2.3. It is easily checked that $(X_t,Y_t)$ is a diffusion coupling in the sense of Kendall-Cranston \cite{kendall,Hsu}. 
        \begin{rema}
            The main motivation behind this definition is that both $X_t, Y_t$ are generated by the operator $L$ that appears in the ground state equation \eqref{Ground state equation} in section 5 below, which is essential in proving the gap comparison. It should be pointed out that with the Neumann boundary condition, $\phi_1$ is always constant. Therefore, $\nabla\phi_1=0$ and the coupling $(X_t,Y_t)$ given above becomes a coupling of Brownian motions, which is the canonical diffusion to study the first non-zero Neumann eigenvalue (see \cite{Hsu} chapter 6).  
        \end{rema}
  
        Our first task is to prove that for arbitrary starting points $X_0,Y_0\in \Omega$, the processes $X_t,Y_t$ will stay in $\Omega$ for all $t\geq 0$.
		\begin{lemm}\label{lem:process}
		Let $\Omega$ be a bounded strictly convex domain in a Riemannian manifold with smooth boundary $\partial \Omega$.	If $X_0, Y_0\in\Omega$, then $X_t,Y_t\in\Omega$ for all $t\geq 0$.
		\end{lemm}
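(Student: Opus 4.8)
The goal is to show the coupled diffusion $(X_t,Y_t)$ from \eqref{couping1}--\eqref{coupling2}, started at interior points of a strictly convex domain $\Omega$, never leaves $\Omega$. Since $X_t$ and $Y_t$ individually are diffusions generated by the same operator $L = \Delta + 2\nabla\log\phi_1\cdot\nabla$ (the generator attached to the ground-state equation), it suffices to prove the statement for a single diffusion $Z_t$ generated by $L$ started at $Z_0\in\Omega$; the coupling structure and the mirror map play no role here, as they only prescribe the joint law. So the plan is to reduce to the one-particle statement and then run a boundary-barrier (supermartingale) argument using the distance-to-the-boundary function.

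First I would set $\rho_{\partial\Omega}$ to be the distance to $\partial\Omega$, which by Proposition~\ref{Solutions to heat equations} (and the convexity of $\Omega$) is smooth on a collar $\partial_{r_0}\Omega$ with $\nabla\rho_{\partial\Omega} = N$ there. The key point is that near the boundary, for $u = \phi_1$ (which vanishes on $\partial\Omega$ with nonvanishing gradient), one has $\nabla\log\phi_1$ pointing essentially in the inward normal direction $N$ with magnitude blowing up like $\rho_{\partial\Omega}^{-1}$ — indeed statement (3) of Proposition~\ref{Solutions to heat equations} gives $\phi_1 \sim |\nabla\phi_1|\,\rho_{\partial\Omega}$, so $\langle 2\nabla\log\phi_1, N\rangle \sim 2/\rho_{\partial\Omega} \to +\infty$ as $\rho_{\partial\Omega}\to 0$. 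Thus the drift of $Z_t$ has a strong inward-pushing component near $\partial\Omega$, which should dominate any bad curvature terms coming from $\Delta\rho_{\partial\Omega}$ (bounded on the collar by convexity/compactness). Applying Itô's formula to $f(\rho_{\partial\Omega}(Z_t))$ for a suitable concave increasing $f$ (e.g. $f = \log$, or more robustly $f(s) = s^\epsilon$ or $f=-\log$ depending on the sign convention), the drift term of $f(\rho_{\partial\Omega}(Z_t))$ becomes $Lf(\rho_{\partial\Omega}) = f''|\nabla\rho_{\partial\Omega}|^2 + f'(\Delta\rho_{\partial\Omega} + 2\langle\nabla\log\phi_1,\nabla\rho_{\partial\Omega}\rangle)$, and the $f'\cdot(2/\rho_{\partial\Omega})$ piece can be arranged to control the rest so that $-\log\rho_{\partial\Omega}(Z_t)$ (stopped upon exiting the collar) is a supermartingale. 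A supermartingale that would have to reach $+\infty$ on the event $\{\rho_{\partial\Omega}(Z_t)\to 0\}$ cannot do so in finite time with positive probability, so $Z_t$ a.s. does not hit $\partial\Omega$ before leaving the collar; iterating (strong Markov property) over excursions shows $Z_t$ never hits $\partial\Omega$, hence stays in $\Omega$ for all $t\ge 0$.

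To make this precise I would work with the stopping times $\tau_r = \inf\{t: \rho_{\partial\Omega}(Z_t) = r\}$ for small $r$ and a slightly larger level $r_0$, show $-\log\rho_{\partial\Omega}(Z_{t\wedge\tau_r\wedge\tau_{r_0}})$ is a bounded-below supermartingale whose value at a hitting of level $r$ is $-\log r \to +\infty$, and use the optional stopping / martingale convergence theorem to conclude $\mathbb{P}(\tau_r < \tau_{r_0}) \to 0$ as $r\to 0$; letting $r\to 0$ gives $\mathbb{P}(Z \text{ hits } \partial\Omega \text{ before } \tau_{r_0}) = 0$. Then the strong Markov property lets us restart at successive returns to level $r_0$ and conclude $Z_t$ avoids $\partial\Omega$ for all time. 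Finally, by applying this to both marginals $X_t = \pi(U_t)$ and $Y_t = \pi(V_t)$ we get the claim.

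\textbf{Main obstacle.} The delicate part is the uniform estimate near the boundary: one must verify that $2\langle\nabla\log\phi_1,\nabla\rho_{\partial\Omega}\rangle$ really does blow up like $2/\rho_{\partial\Omega}$ (with the right sign), and that the error terms — the tangential component of $\nabla\log\phi_1$, the remainder in the asymptotic $\phi_1\sim|\nabla\phi_1|\rho_{\partial\Omega}$, and $\Delta\rho_{\partial\Omega}$ — are all $O(1)$ (or at worst $o(\rho_{\partial\Omega}^{-1})$) uniformly on the collar, so that the supermartingale inequality $Lf(\rho_{\partial\Omega})\le 0$ genuinely holds for $r_0$ small enough. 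This is exactly the content packaged in Proposition~\ref{Solutions to heat equations}(3) together with the smoothness of $\rho_{\partial\Omega}$ on the collar, so the argument should go through, but the bookkeeping of these boundary asymptotics is where the real work lies.
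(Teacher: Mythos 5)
Your proposal is correct and takes essentially the same approach as the paper: reduce to a single marginal $L$-diffusion (the coupling structure playing no role), use Proposition~\ref{Solutions to heat equations}(2)--(3) to extract the inward drift $\langle 2\nabla\log\phi_1, N\rangle\sim 2/\rho_{\partial\Omega}$ near $\partial\Omega$, and conclude the process cannot reach the boundary by a Bessel-type non-hitting argument. The only difference is cosmetic: the paper applies It\^o to a smooth global extension of $\rho_{\partial\Omega}$ and reads off the drift inequality $dr\ge\sqrt2\,d\beta+\tfrac{1}{r}\,dt$ (two-dimensional Bessel comparison), while you pass to $-\log\rho_{\partial\Omega}$ stopped on the collar and run the supermartingale/optional-stopping argument explicitly --- these are the same computation with the final probabilistic step spelled out at different levels of detail.
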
 
		\begin{proof}
			First we choose a smooth function $r:\bar{\Omega}\rightarrow \mathbb{R}$ such that $r(x)>0$ for all $x\in \Omega$ and for some $d_0\in (0, D)$,
			\[ r(x)=\rho_{\partial \Omega }(x),\ \forall x\in\{ p\in\Omega \mid \rho_{\partial \Omega }(p)<d_0  \}, \]
            where $\rho_{\partial \Omega }(x)$ denotes the geodesic distance between $x$ and the boundary $\partial \Omega$. By our definition and compactness (see for instance, Theorem 2.2 of \cite{MR2158015}), it is always possible to find $c_0>0$, such that $\Delta r(x)\geq -c_0$ for all $x\in\Omega$. 
            
            Since we only need to know the behavior of $X_t$ near the boundary, it suffices to show that $r(X_t)>0$ for all $t\geq 0$. By the same arguments in \cite[Theorem 3]{MR1120916} or \cite[Theorem 6.6.2]{MR3197874}, we have 
			\[ dr(X_t)=\sqrt{2}d\beta_t+\left[\Delta r(X_t)+2\nabla\log \phi_1(r)(X_t)\right]dt,   \]
			where $\beta_t$ is a standard Brownian motion. Since $e^{-\lambda_1t}\phi_1$ solves the heat equation in Proposition~\ref{Solutions to heat equations},  $\phi_1(x)$ verifies (2) (3) of Proposition~\ref{Solutions to heat equations}. Hence, we can find $d\in(0, d_0)$ such that
			\[  \Delta r(x)+2\nabla\log \phi_1(r)(x)\geq 2\nabla\log \phi_1(r)(x)-c_0\geq \frac{1}{r(x)},   \]
			for all $x\in \Omega$ with $\rho_{\partial \Omega(x)}<d$. So we get, when $\rho_{\partial\Omega}(X_t)<d$,
			\[  dr(X_t)>\sqrt{2}d\beta_t+\frac{1}{r(X_t)}dt,  \]
			which implies $r(X_t)>0$. Our proof is complete.
		\end{proof}
		
		Recall that $\tau$ is the coupling time of $(X_t,Y_t)$. Our next result shows that $\tau$ is finite almost surely. 
		\begin{lemma}\label{lem:successful coupling}
		Let $(M^n,g)$ be a complete Riemannian manifold with Ricci curvature lower bound $(n-1)k, k\in \mathbb{R}$ and $\Omega\subset M^n$ be a convex domain with diameter $D>0$. Then, for any starting points $X_0,Y_0\in \Omega$, the coupling $(X_t,Y_t)$ is successful in the sense that $\tau<\infty$ almost surely. 
		\end{lemma}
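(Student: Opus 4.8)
The plan is to follow the standard strategy for reflection couplings: control the coupling distance $\rho_t:=\rho(X_t,Y_t)$ by an It\^o equation whose martingale part is uniformly nondegenerate and whose drift is bounded above by a function of $\rho_t$ alone, and then build a Lyapunov function on $[0,D]$. \textbf{Step 1 (the distance SDE).} On $[0,\tau)$ the two copies are distinct and, $\Omega$ being convex, are joined by a minimizing geodesic $\gamma_t$; where $\gamma_t$ is unique the function $\rho(\cdot,\cdot)$ is smooth, and at the cut locus one uses Kendall's semimartingale description of $\rho$ (the cut locus contributes only a nonpositive singular term to the drift) together with the extension of the mirror map across the conjugate locus as in \cite[Theorem~6.62]{Hsu}. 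Applying It\^o's formula to $\rho(X_t,Y_t)$ along \eqref{couping1}--\eqref{coupling2} and using that $W_t$ is the mirror image of $B_t$, one obtains
\[
d\rho_t = 2\sqrt{2}\,d\beta_t + b_t\,dt,\qquad 0\le t<\tau,
\]
for a standard real Brownian motion $\beta_t$. The coefficient is the \emph{constant} $2\sqrt2$: the components of the driving noise orthogonal to $\gamma_t$ are synchronized and disappear from the first variation of $\rho$, while the two radial components are anti-synchronized and add, each contributing the $\sqrt2$ from \eqref{couping1}; in particular the martingale part never degenerates.

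\textbf{Step 2 (drift bound).} Decompose $b_t=b_t^{\mathrm{geo}}+b_t^{\mathrm{grad}}$, the first term coming from the two horizontal Laplacians and
\[
b_t^{\mathrm{grad}}=2\langle\nabla\log\phi_1(Y_t),\gamma_t'(Y_t)\rangle-2\langle\nabla\log\phi_1(X_t),\gamma_t'(X_t)\rangle=2\int_{-\rho_t/2}^{\rho_t/2}\Hess\log\phi_1\big(\gamma_t'(s),\gamma_t'(s)\big)\,ds
\]
from the two gradient drifts, the second equality because $\gamma_t$ is a unit-speed geodesic. By the Kendall--Cranston computation (second variation of arc length along the geodesic variations attached to the synchronized orthogonal noise) and the index comparison with the model Jacobi fields $Q_i$, the Ricci lower bound $\Ric\ge(n-1)k$ yields $b_t^{\mathrm{geo}}\le -2(n-1)\operatorname{tn}_k(\rho_t/2)$, which is $\le 0$ when $k\ge0$ and in all cases is bounded above on $[0,D]$ by a constant $C_1=C_1(n,k,D)$ (when $k>0$ the Ricci bound forces $D<\pi/\sqrt k$ by Bonnet--Myers). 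For the gradient term, Proposition~\ref{prop:key}, applied to $u(x,t)=e^{-\lambda_1 t}\phi_1(x)$ (whose logarithm has the same Hessian as $\log\phi_1$), gives a uniform bound $\Hess\log\phi_1\le C_2$ on $\Omega$, hence $b_t^{\mathrm{grad}}\le 2C_2\rho_t$. Thus $b_t\le\psi(\rho_t)$ on $[0,\tau)$ for the continuous function $\psi(\rho):=2C_2\rho+\max\{0,-2(n-1)\operatorname{tn}_k(\rho/2)\}$, which satisfies $\psi(0)=0$ and is bounded, say by $B_0$, on $[0,D]$.

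\textbf{Step 3 (Lyapunov function) and main obstacle.} Choose an increasing $F\in C^2([0,D])$ with $F(0)=0$ and $4F''(\rho)+\psi(\rho)F'(\rho)\le-1$ on $(0,D]$; one may take $F(\rho)=\int_0^\rho g$ with $g\ge 0$ decreasing, $g(D)=0$, solving $4g'+B_0g\equiv-1$ (explicitly $g(u)=(g(0)+B_0^{-1})e^{-B_0u/4}-B_0^{-1}$ with $g(0)=B_0^{-1}(e^{B_0D/4}-1)$). Since $F'=g\ge 0$, Step 2 gives on $[0,\tau)$
\[
d\big(F(\rho_t)+t\big)\le 2\sqrt2\,F'(\rho_t)\,d\beta_t+\big(4F''(\rho_t)+\psi(\rho_t)F'(\rho_t)\big)dt\le 2\sqrt2\,F'(\rho_t)\,d\beta_t,
\]
and as $F,F'$ are bounded on $[0,D]$ the process $N_t:=F(\rho_{t\wedge\tau})+(t\wedge\tau)$ is a genuine supermartingale. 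Hence $\mathbb{E}[t\wedge\tau]\le\mathbb{E}[N_t]\le\mathbb{E}[N_0]=F(\rho(X_0,Y_0))$, and letting $t\to\infty$ gives $\mathbb{E}[\tau]\le F(\rho(X_0,Y_0))<\infty$, so $\tau<\infty$ almost surely. The main obstacle in executing this plan is Step 1: carrying out the stochastic second-variation computation along the reflection coupling carefully enough to confirm both that the diffusion coefficient is the constant $2\sqrt2$ and that the drift is exactly the arc-length second variation plus the geodesic integral of $\Hess\log\phi_1$, and then handling the cut locus and the conjugate locus cleanly; once the inequality $d\rho_t\le 2\sqrt2\,d\beta_t+\psi(\rho_t)\,dt$ is established, Steps 2--3 are routine.
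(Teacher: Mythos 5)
Your proposal is correct, and its core is the same as the paper's: apply It\^o's formula to $\rho_t=\rho(X_t,Y_t)$ under the reflection coupling to get $d\rho_t=2\sqrt2\,d\beta_t+b_t\,dt$, split the drift into the second-variation (index form) part, controlled by the Ricci lower bound and the comparison Jacobi fields so that it is at most $-2(n-1)\operatorname{tn}_k(\rho_t/2)\le 2C_1$, plus the gradient part $2\int \Hess\log\phi_1(\gamma',\gamma')\,ds\le 2C_2\rho_t$, with $C_2<\infty$ coming from the boundary Hessian bound (your use of Proposition~\ref{prop:key} applied to $e^{-\lambda_1 t}\phi_1$ is exactly how the paper gets it via Proposition~\ref{Solutions to heat equations}(4)). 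Where you genuinely diverge is the endgame: the paper stops at the bound $\theta_t\le 8\eta(\rho_t)$ with $\eta(r)=\tfrac14 C_2 r+C_1$ and invokes Lemma~1 of Cranston \cite{MR0841588} to conclude that $\rho_t$ hits $0$ in finite time, whereas you give a self-contained Lyapunov/scale-function argument: an increasing $F$ with $4F''+\psi F'\le -1$ makes $F(\rho_{t\wedge\tau})+(t\wedge\tau)$ a supermartingale, yielding the quantitative bound $\mathbb{E}[\tau]\le F(\rho(X_0,Y_0))<\infty$, which is stronger than the almost sure finiteness stated. This buys independence from the cited lemma (and an explicit expected coupling time), at the cost of a slightly longer writeup; you are also more explicit than the paper about the cut-locus/nonsmoothness issue, which only helps since Kendall's singular term has a favorable sign. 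Two cosmetic points: your displayed chain for $d\bigl(F(\rho_t)+t\bigr)$ drops the ``$+1\,dt$'' coming from $dt$ before absorbing it, though the final inequality is still valid because $4F''+\psi F'\le -1$; and your explicit formula for $g$ implicitly assumes $B_0>0$, which one should enforce by taking $B_0=\max\{1,\sup_{[0,D]}\psi\}$. Neither affects correctness.
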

		\begin{proof}
			Let $\rho_t=\rho(X_t,Y_t)$ be the geodesic distance between $X_t,Y_t$. It suffices to prove $\rho_t$ hits $0$ in finite time almost surely. Let $\{e_i\}_{1\leq i\leq n}$ be an orthonormal frame at $X_t$ and we parallel transport them along the unique minimizing geodesic that connects $X_t, Y_t$. We further assume that $e_n$ is the tangential direction. By It\^o's formula (see section 6.6 of \cite{Hsu} for a similar computation)
			\begin{equation*}
				d\rho_t=2\sqrt{2}d\beta_t+\left(\sum_{i=1}^{n}I(Q_i,Q_i)+2\langle \nabla\log\phi_1(Y_t), \gamma'(Y_t)  \rangle -2\langle \nabla\log\phi_1(X_t), \gamma'(X_t)  \rangle \right)dt,
			\end{equation*}
			where $\beta_t$ is a Brownian motion independent from $B_t$ and $Q_i$ is the Jacobi field associated with any variation whose tangent vector at $(X_t,Y_t)$ is given by $e_i\oplus m(X_t,Y_t)e_i$. The index form of a vector field $X$ (with not necessarily vanishing endpoints) along a geodesic $\gamma$ is defined by
			\begin{align*}
				I (\gamma,X,X): = \int_0^T \left( \langle \nabla_{\gamma'}  X, \nabla_{\gamma'} X  \rangle-\langle R(\gamma',X)X,\gamma'\rangle \right) dt,
			\end{align*}
			where $\nabla$ is the Levi-Civita connection and $R$ is the Riemann curvature tensor of a given manifold. 
			We will use Lemma 1 from \cite{MR0841588} to verify that $\rho_t$ hits $0$ in finite time. We have
			\begin{align*}
				\theta_t:&=2\left(\langle \nabla\log\phi_1(Y_t), \gamma'(Y_t)  \rangle -\langle \nabla\log\phi_1(X_t). \gamma'(X_t)  \rangle \right)+\sum_{i=1}^{n}I(Q_i,Q_i)\\
				&=2\int_{0}^{\rho_t}\nabla^2\log\phi_1(\gamma'(s),\gamma'(s))ds+\sum_{i=1}^{n}I(Q_i,Q_i).
			\end{align*}
			By Index lemma, we can find $C_1(k)>0$ such that
			\[ \frac{1}{2}\sum_{i=1}^{n}I(Q_i,Q_i)\leq -(n-1)tn_k(\rho_t/2)\leq C_1. \]
			Consequently 
			\[ \theta_t\leq 2\int_{0}^{\rho_t}\nabla^2\log\phi_1(\gamma'(s),\gamma'(s))ds+C_1. \]
			Let
			\[ C_2=\sup\{  \Hess \phi_1(p)(v,v)  :   p\in\bar{\Omega},\  v\in T_pM^n,\ \vert v\vert=1 \}.  \] 
   By (4) of  Proposition~\ref{Solutions to heat equations}, $C_2$ is finite. 
			We introduce
			\begin{equation*}
				\eta(r)=\begin{cases}
					\frac{1}{4}C_2 \cdot r+C_1 &  r\leq D\\
					-1 & r>D.
				\end{cases}
			\end{equation*}
			Since $X_t,Y_t$ never leave $\Omega$, $\rho_t\leq D$ for all $t\geq 0$ and we have $\theta_t\leq 8\eta(\rho_t) $ for all $t\geq 0$. It follows that the assumptions of Lemma 1 of \cite{MR0841588} are verified. We conclude that $\rho_t$ hits $0$ in finite time almost surely. The proof is complete.
		\end{proof}
		Next we introduce another important pair of diffusion $(X'_t,Y'_t)$, which will be used in establishing the log-concavity estimate for the first eigenfunction of $-\Delta+V$. Let us consider 
		\begin{align}\label{eq:new-couple-diffusion}
			&dU'_t=\sum_{i=1}^{n}\sqrt{2}H_{e_i}(U'_t)\circ dB^i_t+2H_{\nabla\log \phi_1}(U'_t) dt+2tn_k\left(\frac{\tilde{\rho}(U'_t,V'_t)}{2}\right)\tilde{\gamma}'(t)dt,\\ \nonumber
			&dV'_t=\sum_{i=1}^{n}\sqrt{2}H_{e_i}(V'_t)\circ dW^i_t+2H_{\nabla\log \phi_1}(V'_t) dt-2tn_k\left(\frac{\tilde{\rho}(U'_t,V'_t)}{2}\right)\tilde{\gamma}'(t)dt,\\ \nonumber
			&dW_t=V'_tm_{X_tY_t}(U'_t)^{-1} dB_t,\\ \nonumber
			&X'_t=\pi(U'_t),\ \ \ \ \ \ Y'_t=\pi(V'_t). 		
		\end{align} 
		Here $\tilde{\rho}$ is the lift of $\rho$, which is defined as $\tilde{\rho}(U,V)=\rho\left(\pi(U),\pi(V)\right)$, and $\tilde{\gamma}$ is the horizontal lift of the minimizing geodesic that goes from $X'_t$ to $Y'_t$. We note that $X'_t, Y'_t$ can not be defined separately since the drift term depends on both $X'_t$ and $ Y'_t$. For this reason, $(X'_t, Y'_t)$ is not a coupling of diffusion. We define $\tau'=\inf_{t\geq 0}\{ \rho(X'_t, Y'_t)=0$\}. With a slight abuse of terminology, we will still call $\tau'$ the coupling time. As before, we define $X'_t=Y'_t$ when $t\geq \tau'$. 
    
    \begin{rema}
        There is no intuitive explanation of the extra term, compared to $(X_t,Y_t)$, added in the definition of $(X'_t,Y'_t)$; it was what came out of our computations. However, it is interesting to note that when $k=0$, the couple diffusion $(X'_t,Y'_t)$ coincides with the coupling $(X_t, Y_t)$. We can thus regard $(X'_t,Y'_t)$ as a non-trivial generalization of $(X_t,Y_t)$ that has incorporated more information about the curvature of the underlying manifold. 
    \end{rema}

		One can argue in the same way as we did in Lemma~\ref{lem:process} to justify the following.
		\begin{lemm}
        Under the same assumptions of Lemma~\ref{lem:process}, if $X'_0, Y'_0\in\Omega$, then $X'_t,Y'_t\in\Omega$ for all $t\geq 0$.
		\end{lemm}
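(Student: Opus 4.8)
The plan is to rerun the proof of Lemma~\ref{lem:process} with essentially no change. The only feature of \eqref{eq:new-couple-diffusion} that is not already present in the coupling $(X_t,Y_t)$ of Lemma~\ref{lem:process} is the extra drift $\pm 2\,tn_k(\tilde{\rho}/2)\,\tilde{\gamma}'$, so the first thing I would check is that, near $\partial\Omega$, this term contributes only a bounded amount and can therefore be absorbed into the constant that bounds $\Delta r$ from below. Since $(X'_t,Y'_t)$ cannot be defined one component at a time, I would carry the argument out up to the joint exit time $\sigma:=\inf\{t\ge 0:\ X'_t\notin\Omega\ \text{ or }\ Y'_t\notin\Omega\}$ and show that neither $r(X'_{t\wedge\sigma})$ nor $r(Y'_{t\wedge\sigma})$ ever reaches $0$, which forces $\sigma=\infty$.

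In detail, I would take the same smooth function $r\colon\overline{\Omega}\to\mathbb{R}$ as in Lemma~\ref{lem:process} (positive on $\Omega$, equal to $\rho_{\partial\Omega}$ near $\partial\Omega$) and fix $c_0>0$ with $\Delta r\ge -c_0$ on $\Omega$. Applying It\^o's formula to $r(X'_t)$ along \eqref{eq:new-couple-diffusion}, and using that the horizontal lift $\tilde{\gamma}'$ projects to the unit tangent $\gamma'(X'_t)$ at $X'_t$ of the minimal geodesic from $X'_t$ to $Y'_t$, one gets, for $t<\sigma$,
\[ dr(X'_t)=\sqrt{2}\,d\beta_t+\Bigl[\Delta r(X'_t)+2\langle\nabla\log\phi_1,\nabla r\rangle(X'_t)+2\,tn_k\bigl(\tfrac{\rho(X'_t,Y'_t)}{2}\bigr)\langle\gamma'(X'_t),\nabla r(X'_t)\rangle\Bigr]dt \]
with $\beta_t$ a standard Brownian motion. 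For $t<\sigma$ we have $\rho(X'_t,Y'_t)\le D$, so the argument of $tn_k$ stays bounded away from its singularity and $|tn_k(\rho(X'_t,Y'_t)/2)|$ is bounded by a constant; since also $|\langle\gamma'(X'_t),\nabla r(X'_t)\rangle|\le\sup_{\overline{\Omega}}|\nabla r|<\infty$, the last term in the bracket is bounded in absolute value by some $c_3>0$.

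From this point the argument of Lemma~\ref{lem:process} applies verbatim after replacing $c_0$ by $c_0+c_3$: since $e^{-\lambda_1 t}\phi_1$ solves the heat equation of Proposition~\ref{Solutions to heat equations}, the first eigenfunction $\phi_1$ satisfies (2) and (3) there, and hence (because $\langle\nabla\log\phi_1,\nabla r\rangle=\langle\nabla\log\phi_1,N\rangle\sim 1/\rho_{\partial\Omega}$ near $\partial\Omega$) there is $d\in(0,d_0)$ such that, for $\rho_{\partial\Omega}(x)<d$, the whole bracket is $\ge 2\langle\nabla\log\phi_1,\nabla r\rangle(x)-(c_0+c_3)\ge 1/r(x)$. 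Thus, whenever $\rho_{\partial\Omega}(X'_t)<d$ and $t<\sigma$, one has $dr(X'_t)>\sqrt{2}\,d\beta_t+\tfrac{1}{r(X'_t)}\,dt$, and comparison with a Bessel-type process, exactly as in the proof of Lemma~\ref{lem:process}, gives $r(X'_{t\wedge\sigma})>0$ for all $t$; the identical estimate applied to $r(Y'_{t\wedge\sigma})$ gives $r(Y'_{t\wedge\sigma})>0$. Hence $X'_{t\wedge\sigma}$ and $Y'_{t\wedge\sigma}$ never meet $\partial\Omega$, so $\sigma=\infty$, i.e. $X'_t,Y'_t\in\Omega$ for all $t\ge 0$. (For $t\ge\tau'$ the extra drift is absent, since then $\rho\equiv 0$ and $X'_t=Y'_t$, so there the conclusion already follows from Lemma~\ref{lem:process}.)

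The point that really needs attention — rather than a genuine obstacle, given Lemma~\ref{lem:process} — is that $X'_t$ and $Y'_t$ cannot be treated separately: one must run the barrier argument up to the joint exit time $\sigma$ and verify that the coupling drift $tn_k(\rho(X'_t,Y'_t)/2)$ remains bounded there, which is exactly where the bound $\rho(X'_t,Y'_t)\le D$ (and, when $k>0$, the diameter restriction) enters. Everything else is the computation already carried out in the proof of Lemma~\ref{lem:process}.
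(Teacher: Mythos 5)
Your proposal is correct and is exactly the argument the paper intends: the paper simply asserts that the proof of Lemma~\ref{lem:process} carries over, and you carry it out, correctly observing that the only new ingredient, the drift $2\,tn_k(\rho(X'_t,Y'_t)/2)\,\gamma'$, stays bounded before the joint exit time (using $\rho\le D<\pi/\sqrt{k}$) and can be absorbed into the constant bounding $\Delta r$, so the Bessel-type comparison near $\partial\Omega$ goes through unchanged.
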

		
		Let us observe that, $(X_t', Y_t')$, although do not form a usual coupling, will also meet in finite time. In particular, we can give a comparison between $\tau$ and $\tau'$ based on $k$.
		\begin{lemma}\label{lem:successful coupling for modified diffusion}
			As in the setting of Lemma~\ref{lem:successful coupling}, we have $\tau'<\infty$ almost surely. Moreover, we have $\tau'\leq\tau$ when $k\geq 0$ and $\tau> \tau'$ when $k < 0$ almost surely.
		\end{lemma}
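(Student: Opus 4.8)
The plan is to rerun the argument of Lemma~\ref{lem:successful coupling} almost verbatim for the finiteness of $\tau'$, and then to obtain the comparison with $\tau$ by realizing the two processes on a common probability space and comparing their radial parts. For the finiteness, put $\rho'_t=\rho(X'_t,Y'_t)$. As in Lemma~\ref{lem:successful coupling}, It\^o's formula gives
\[
d\rho'_t=2\sqrt 2\,d\beta'_t+\Bigl(\sum_{i=1}^n I(Q'_i,Q'_i)+2\langle\nabla\log\phi_1(Y'_t),\gamma'(Y'_t)\rangle-2\langle\nabla\log\phi_1(X'_t),\gamma'(X'_t)\rangle+\delta_t\Bigr)dt,
\]
the only new feature being the term $\delta_t$ coming from the two extra drifts $\pm2\operatorname{tn}_k(\tilde\rho/2)\tilde\gamma'$ of \eqref{eq:new-couple-diffusion}. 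By the first variation of arclength each of these drifts changes $\rho'_t$ at rate $-2\operatorname{tn}_k(\rho'_t/2)$, so $\delta_t=-4\operatorname{tn}_k(\rho'_t/2)$. Since $X'_t,Y'_t\in\Omega$ we have $0\le\rho'_t\le D$, and under the standing hypotheses ($D<\pi/\sqrt k$ when $k>0$, no constraint when $k\le0$) the function $\operatorname{tn}_k$ is bounded on $[0,D/2]$; hence $\delta_t$ is bounded, and together with the index-lemma bound on $\sum_i I(Q'_i,Q'_i)$ and the Hessian bound from Proposition~\ref{Solutions to heat equations}(4), the whole drift of $\rho'_t$ is dominated by $8\eta'(\rho'_t)$ for a function $\eta'$ of exactly the shape of the $\eta$ used in Lemma~\ref{lem:successful coupling} (only the additive constant grows). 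Lemma~1 of \cite{MR0841588} then yields $\tau'<\infty$ almost surely; that $X'_t,Y'_t$ stay in $\Omega$ is the preceding lemma, proved as in Lemma~\ref{lem:process}.

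For the comparison, drive the couplings \eqref{couping1}--\eqref{coupling2} and \eqref{eq:new-couple-diffusion} by the same source of randomness and start them from $(X_0,Y_0)=(X'_0,Y'_0)$. Before the respective coupling times, $\rho_t$ and $\rho'_t$ then solve radial SDEs that differ only through the drift $\delta_t=-4\operatorname{tn}_k(\rho'_t/2)\,dt$ identified above; on $(0,D)$ this has constant sign, namely that of $-k$, and it vanishes identically when $k=0$ --- in which case \eqref{eq:new-couple-diffusion} reduces to \eqref{couping1}--\eqref{coupling2} and $\tau'=\tau$. A stochastic comparison of these two one-dimensional equations then yields the pathwise ordering of $\rho'_t$ and $\rho_t$, hence of their first hitting times of $0$: one gets $\rho'_t\le\rho_t$ and $\tau'\le\tau$ when $k\ge0$, and the case $k<0$ is handled the same way up to the reversed sign of $\delta_t$.

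The delicate point is this last comparison. The drifts of $\rho_t$ and $\rho'_t$ are not functions of $\rho_t,\rho'_t$ alone --- they involve $\nabla\log\phi_1$ along the moving minimal geodesic and the index forms $I(Q_i,Q_i)$ --- and their martingale parts are built from the instantaneous radial directions of two a priori distinct pairs of points, so no one-dimensional comparison theorem applies off the shelf. I would get around this by comparing each of $\rho_t,\rho'_t$ with a common explicit one-dimensional diffusion that dominates its drift, arguing that the two systems agree until the first instant their paths separate, and exploiting the definite sign of $\delta_t$; bookkeeping the resulting local-time term via Tanaka's formula applied to $(\rho'_t-\rho_t)^{\pm}$ then gives the pathwise inequality. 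Everything else is a repetition of the estimates already carried out in Lemmas~\ref{lem:process}--\ref{lem:successful coupling}.
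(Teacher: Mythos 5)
Your proposal is essentially the paper's own proof: It\^o's formula shows that the radial process of $(X'_t,Y'_t)$ satisfies the same SDE as in Lemma~\ref{lem:successful coupling} up to the extra drift $-4\operatorname{tn}_k(\rho'_t/2)$, which is uniformly bounded on $\Omega$, so the Cranston-type hitting argument gives $\tau'<\infty$, and the ordering of $\tau$ and $\tau'$ is read off from the sign of this extra drift via a comparison of the two radial equations. The ``delicate point'' you flag about the comparison is legitimate, but the paper does no more than you do --- it simply invokes ``the diffusion comparison theorem'' without addressing that the drifts (and the radial Brownian motions) depend on the full configurations rather than on the distances alone --- and note that your implicit conclusion for $k<0$ (the extra drift is positive, so $\rho'_t\ge\rho_t$ and $\tau\le\tau'$) agrees with the inequality $\rho(X_t,Y_t)<\rho(X'_t,Y'_t)$ derived in the paper's proof, the reversed inequality ``$\tau>\tau'$'' printed in the lemma statement apparently being a typo.
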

		\begin{proof}
			Since $\nabla_{\gamma'(X'_t)}\rho(X'_t,Y'_t)=-\nabla_{\gamma'(Y'_t)}\rho(X'_t,Y'_t)=-1$ , we have by It\^o's formula
            \begin{align*}
				d\rho(X'_t,Y'_t)= 2\sqrt{2}d\beta_t+\left(\sum_{i=1}^{n}I(Q_i,Q_i)+F(X'_t,Y'_t)-4tn_k\left(\frac{\rho(X'_t,Y'_t)}{2}\right) \right)dt,
			\end{align*}
             where
			\[ F(X_t,Y_t)= \langle \nabla\log\phi_1(Y_t), \gamma'(Y_t)  \rangle -\langle \nabla\log\phi_1(X_t), \gamma'(X_t)  \rangle, \]
            and $\beta_t$ is a standard Brownian motion. Since $tn_k\left(\rho(X'_t,Y'_t)/2\right)$ is uniformly bounded in $\Omega$, the exact same argument of Lemma \ref{lem:successful coupling} applies to $\rho(X'_t,Y'_t)$ and $\tau'<+\infty$ follows.
		
            For our second claim, we observe that $tn_k\left(\rho(X'_t,Y'_t)/2\right)\geq 0$ when $k\geq 0$ and $tn_k\left(\rho(X'_t,Y'_t)/2\right)< 0$ when $k< 0$. Hence  from the diffusion comparison theorem we deduce that $\rho(X_t,Y_t)\geq \rho(X'_t,Y'_t)$ when $k\geq 0$ and $\rho(X_t,Y_t)<\rho(X'_t,Y'_t)$ when $k<0$. The proof is complete. 
            \end{proof}
		
		\section{Log-concavity of the first eigenfunction}
		In this section we establish a log-concavity estimate of the first eigenfunction of the Schr\"odinger operator $-\Delta+V$ on $\Omega$, Theorem~\ref{log-concavity}. Our analysis in this section rely on the diffusion $(X'_t,Y'_t)$ that we introduced in \eqref{eq:new-couple-diffusion}. 
        
        We pick two distinct points $x,y\in \Omega$ and let them be the starting points of $X'_t,Y'_t$ respectively. Denote $\xi_t=\rho(X'_t,Y'_t)/2$.  Let $\gamma$ be the normal minimal geodesic that goes from $X'_t$ to $Y'_t$ with $\gamma(-\xi_t) = X_t'$ and $\gamma(\xi_t) = Y_t'$. We choose $\{e_i\}_{1\leq i\leq n}$ an orthonormal basis at $X_t$ with $e_n=\gamma'(X'_t)$ and parallel transport $\{e_i\}_{1\leq i\leq n}$ along $\gamma$. Let 
		\[ E_i= e_i\oplus e_i,\ 1\leq i\leq n-1,\ E_n=e_n\oplus -e_n.  \]
		We define 
		\begin{equation}
			\label{F_s}
			F_t=\langle \nabla\log \phi_1(Y'_t), \gamma'(Y'_t)  \rangle -\langle \nabla\log \phi_1(X'_t), \gamma'(X'_t)  \rangle.
		\end{equation}
		One crucial step of our method is to find the SDE of $F_t$. The essence is to use It\^o's formula and the geodesic variations as in \cite{MR3960269} (see Section 2.6) to compute the first two orders of covariant derivatives.


    The following is our key formula. 
		\begin{prop}
			Let $\lambda_1$ be the first eigenvalue of $-\Delta+V$ on a convex domain $\Omega \subset \mathbb M^n_k$ and $\omega=\log \phi_1$. Then the SDE of $F_t$ is given by
			\begin{align}
				dF_t&=d\{\text{martingale}\}\nonumber\\
				&+\left\{ \langle \nabla V(Y'_t), e_n\rangle-\langle \nabla V(X'_t), e_n\rangle\right\}dt \nonumber\\
				& +(n-1)(K-tn^2_k(\xi_t))\left\{\langle \nabla \omega(Y'_t), e_n\rangle-\langle \nabla \omega(X'_t), e_n\rangle \right\}dt\nonumber\\
				& +2tn_k(\xi_t)\left[\langle  \nabla\omega(Y'_t), e_n  \rangle^2+\langle \nabla\omega(X'_t), e_n  \rangle^2 +2\lambda_1-V(X_t')-V(Y_t')\right]dt\nonumber\\
				&+\frac{2}{sn_k(2\xi_t)}\sum_{i=1}^{n-1}\left(\langle  \nabla\omega(Y'_t), e_i  \rangle-\langle \nabla\omega(X'_t), e_i  \rangle \right)^2dt.\label{dF for log-concavity}
			\end{align}
			\end{prop}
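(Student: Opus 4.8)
The plan is to write $F_t = G(X_t',Y_t')$, where $G(x,y) = \langle \nabla\omega(y),\gamma_{xy}'(y)\rangle - \langle\nabla\omega(x),\gamma_{xy}'(x)\rangle$ and $\gamma_{xy}$ is the unit-speed minimal geodesic from $x$ to $y$. Since $\Omega$ is strictly convex and $D<\pi/\sqrt k$, any two distinct points of $\overline\Omega$ are joined by a unique, non-conjugate minimal geodesic lying in $\Omega$, so $G$ is smooth on $\overline\Omega\times\overline\Omega$ off the diagonal, and by the lemmas of Section 3 it is $C^\infty$ along $(X_t',Y_t')$ up to the coupling time $\tau'$. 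I would then apply It\^o's formula to $G(X_t',Y_t')$ via the frame-bundle SDE \eqref{eq:new-couple-diffusion}: in Stratonovich form the terms against $\circ\,dB_t^i$ give the martingale part, and after passing to It\^o form the remaining drift is $\mathcal L G(X_t',Y_t')\,dt$, where
\[
\mathcal L \;=\; \sum_{i=1}^n D_{E_i}^2 \;+\; 2\,D_{\nabla\omega\oplus\nabla\omega}\;+\;2\operatorname{tn}_k(\xi_t)\,D_{E_n}
\]
is the generator of the joint process $(X_t',Y_t')$. Here $D_E$ denotes differentiation of $G$ along the geodesic variation $\eta(r,s)$ of Section 2.6 with endpoint velocity field $E$; the coupled Brownian directions are $E_i = e_i\oplus e_i$ $(1\le i\le n-1)$ and $E_n = e_n\oplus(-e_n)$ — this is exactly the action of the mirror map $m(X_t',Y_t')$, which fixes $e_i$ and sends $e_n=\gamma'(X_t')$ to $-\gamma'(Y_t')$ — while $2\nabla\omega\oplus2\nabla\omega$ and $2\operatorname{tn}_k(\xi_t)\,(e_n\oplus(-e_n))$ are the two pieces of the drift of \eqref{eq:new-couple-diffusion}. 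So the whole content reduces to the pointwise evaluation of $\mathcal L G$.

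To compute the variations I would use \eqref{Jacobi field first derivative}--\eqref{Jacobi field second derivative} (for the transverse variations $\nabla_rT=J'$ and $\nabla_r\nabla_rT=-\|J'\|^2e_n$; for the tangential variation $E_n$ the unit tangent is parallel along $\gamma$, so both vanish) together with the explicit Jacobi fields of $\mathbb M^n_k$ from Section 2.6: $Q_i(s)=\tfrac{\operatorname{cs}_k(s)}{\operatorname{cs}_k(\xi_t)}e_i(s)$ for $E_i$ $(i<n)$, so that $Q_i(\pm\xi_t)=e_i$ and $Q_i'(\pm\xi_t)=\mp\operatorname{tn}_k(\xi_t)e_i$; $Q_n(s)=-\tfrac{s}{\xi_t}e_n(s)$ for $E_n$; and the interpolating field $Q_i(s)=\tfrac{a_i\operatorname{sn}_k(\xi_t-s)+b_i\operatorname{sn}_k(\xi_t+s)}{\operatorname{sn}_k(2\xi_t)}e_i(s)$ (with linear interpolation in the $e_n$-slot) for the variation realizing the $\nabla\omega\oplus\nabla\omega$ drift, where $a_i=\langle\nabla\omega(X_t'),e_i\rangle$ and $b_i=\langle\nabla\omega(Y_t'),e_i\rangle$. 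Twice-differentiating $\langle\nabla\omega(\sigma(r)),T\rangle$ along a geodesic $\sigma$ produces third covariant derivatives of $\omega$ (from $\nabla_{\sigma'}\nabla_{\sigma'}\nabla\omega$), mixed terms $\nabla^2\omega(Q_i,Q_i')$, and the terms $-\|Q_i'\|^2\langle\nabla\omega,e_n\rangle$; first-order variations produce $\nabla^2\omega(Q_i,e_n)$ and $\langle\nabla\omega,Q_i'\rangle$.

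The result is then collapsed by three inputs. First, Bochner: because $Q_i(\pm\xi_t)=e_i$ and $Q_n(\mp\xi_t)=\mp e_n$, summing the third-derivative terms over $i$ gives the connection Laplacian of the $1$-form $d\omega$ at $Y_t'$ and at $X_t'$, which the Bochner--Weitzenb\"ock formula turns into $d(\Delta\omega)+\operatorname{Ric}(d\omega,\cdot)=d(\Delta\omega)+(n-1)k\,d\omega$ in $\mathbb M^n_k$. Second, the ground-state equation: $-\Delta\phi_1+V\phi_1=\lambda_1\phi_1$ with $\omega=\log\phi_1$ gives $\Delta\omega=V-\lambda_1-|\nabla\omega|^2$, hence $\nabla_{e_n}\Delta\omega=\langle\nabla V,e_n\rangle-2\nabla^2\omega(\nabla\omega,e_n)$; the term $-2\nabla^2\omega(\nabla\omega,e_n)$ is cancelled by the leading term of $2D_{\nabla\omega\oplus\nabla\omega}G$, leaving the $\langle\nabla V,e_n\rangle$-difference (line 2) and the $(n-1)k$-difference, which combines with $\sum_{i<n}-\|Q_i'(\pm\xi_t)\|^2\langle\nabla\omega,e_n\rangle=-(n-1)\operatorname{tn}_k^2(\xi_t)\langle\nabla\omega,e_n\rangle$ into line 3; moreover the mixed terms $\nabla^2\omega(Q_i,Q_i')$ $(i<n)$ together with $2\operatorname{tn}_k(\xi_t)D_{E_n}G$ assemble into $-2\operatorname{tn}_k(\xi_t)\bigl(\Delta\omega(X_t')+\Delta\omega(Y_t')\bigr)=2\operatorname{tn}_k(\xi_t)\bigl(2\lambda_1-V(X_t')-V(Y_t')+|\nabla\omega(X_t')|^2+|\nabla\omega(Y_t')|^2\bigr)$. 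Third, the half-angle identities $\operatorname{sn}_k(2\xi)=2\operatorname{sn}_k(\xi)\operatorname{cs}_k(\xi)$ and $\operatorname{cs}_k^2+k\operatorname{sn}_k^2=1$, which yield $\operatorname{tn}_k(\xi)+\tfrac{\operatorname{cs}_k(2\xi)}{\operatorname{sn}_k(2\xi)}=\tfrac1{\operatorname{sn}_k(2\xi)}$: combining the tangential pieces $\sum_{i<n}(a_i^2+b_i^2)$ of the $|\nabla\omega|^2$ terms above with the remaining part of $2D_{\nabla\omega\oplus\nabla\omega}G$, namely $\tfrac{2}{\operatorname{sn}_k(2\xi_t)}\sum_{i<n}\bigl((a_i^2+b_i^2)\operatorname{cs}_k(2\xi_t)-2a_ib_i\bigr)$, collapses them to $\tfrac{2}{\operatorname{sn}_k(2\xi_t)}\sum_{i<n}(b_i-a_i)^2$ (line 5), while the remainder $|\nabla\omega|^2-\sum_{i<n}\langle\nabla\omega,e_i\rangle^2=\langle\nabla\omega,e_n\rangle^2$ produces line 4. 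Collecting the survivors is precisely \eqref{dF for log-concavity}.

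The main obstacle is entirely this last step: one must run the first/second variation of $G$ along the four geodesic variations while keeping the parallel-frame conventions and the endpoint data of the space-form Jacobi fields straight, and then recognize that the interlocking use of Bochner's formula, the ground-state equation, and the half-angle identities for $\operatorname{sn}_k,\operatorname{cs}_k,\operatorname{tn}_k$ makes every third- and second-order derivative of $\omega$ cancel, leaving exactly the five lines of \eqref{dF for log-concavity}. The subtlest bookkeeping point is that the $\nabla\omega\oplus\nabla\omega$ part of the drift must be handled with the interpolating Jacobi field $\tfrac{a_i\operatorname{sn}_k(\xi-s)+b_i\operatorname{sn}_k(\xi+s)}{\operatorname{sn}_k(2\xi)}e_i$ and not with the transverse field $\tfrac{\operatorname{cs}_k(s)}{\operatorname{cs}_k(\xi)}e_i$, because $\nabla\omega$ need not be perpendicular to $\gamma'$; this is what produces the $\operatorname{sn}_k(2\xi_t)^{-1}$ term. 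This is essentially the computation of \cite{MR3960269, MR4124117}, reorganized so that It\^o's formula delivers it directly.
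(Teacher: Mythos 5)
Your proposal is correct and takes essentially the same approach as the paper: apply It\^o's formula to $F_t$ using the frame-bundle SDE \eqref{eq:new-couple-diffusion}, compute the first- and second-order variations of $\langle\nabla\omega,T\rangle$ via the geodesic variations and explicit space-form Jacobi fields of Section~2.6, then simplify using Bochner--Weitzenb\"ock, the ground-state identity $\Delta\omega=V-\lambda_1-|\nabla\omega|^2$, and the identity $tn_k(\xi)=\tfrac{1-cs_k(2\xi)}{sn_k(2\xi)}$. The only difference from the paper's write-up is cosmetic: you package the drift as a single variation along $\nabla\omega\oplus\nabla\omega$ with the interpolating Jacobi field $\tfrac{a_i\,sn_k(\xi-s)+b_i\,sn_k(\xi+s)}{sn_k(2\xi)}e_i$, whereas the paper splits it into the two boundary-data cases $\nabla\omega(X')\oplus 0$ and $0\oplus\nabla\omega(Y')$ and adds the results (which is the same thing by linearity of the first variation).
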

            \begin{rema}
                It is possible to give the precise expression for the martingale part of $dF_t$, which, however, is irrelevant to our purpose. We write it as it is now to save space.
            \end{rema}
			\begin{proof}
				Recall we use the notation $\xi_t=\rho(X'_t,Y'_t)/2$. By It\^o's formula \eqref{Ito's formula}, we have 
				\begin{align*}
					dF_t&=\sum_{i=1}^{n}\nabla_{e_i\oplus 0} F_t(X'_t, Y'_t)\circ d(X'_t)^i+\nabla_{0\oplus e_i} F_t(X'_t, Y'_t)\circ d(Y'_t)^i\\
					&= d\{\text{martingale}\}+\{\sum_{i=1}^{n}\nabla_{E_i}\nabla_{E_i}F_t(X'_t, Y'_t) \} dt\\
					&+\left\{2\nabla\log \phi_1(X'_t)(F_t)+2\nabla\log \phi_1(Y'_t)(F_t)+ 2tn_k\left(\xi_t\right)\nabla_{ e_n \oplus 0}F_t-2tn_k\left(\xi_t\right)\nabla_{ 0 \oplus e_n}F_t  \right\}dt.
				\end{align*} 
    
			We begin with first derivative terms. Use the variation from Section 2.6 for the vector $\nabla \omega(X_t') \oplus 0$, 
   denote $\eta(r,s)$ the variation, and  $T$ the unit variation field,  see \eqref{T}. 
				
				
\begin{eqnarray*}
	\lefteqn{\nabla\log \phi_1(X'_t)(F_t)} \\
	& = &  \langle  \nabla\omega(Y'_t), \nabla_{\tfrac{\partial}{\partial r}} T(r, s)  \rangle|_{r=0, s= \xi_t}  - \left(\langle \nabla_{\nabla \omega} \nabla\omega(r,s), T(r,s)  \rangle +\langle  \nabla\omega(X'_t), \nabla_{\tfrac{\partial}{\partial r}} T(r,s)  \rangle \right)|_{r=0, s= -\xi_t} \\
\end{eqnarray*}
  By \cite[Page 363]{MR3960269} $$\nabla_{\tfrac{\partial}{\partial r}} T(r, s)|_{r=0} = -\langle \gamma'(s), J'(s) \rangle e_n + J'(s),$$ where $J(s)$ is the Jacobi field along $\gamma$ with $J(-\xi_t) = \nabla \omega (X_t'), \ J(\xi_t) =0$. 
  For $\mathbb M_k^n$, $$J(s) = \left( -\frac{\langle \nabla\omega(X'_t), e_n\rangle}{2\xi_t} s + \frac{\langle \nabla\omega(X'_t), e_n\rangle}{2} \right) e_n   + \sum_{i=1}^{n-1} \langle \nabla\omega(X'_t), e_i \rangle \frac{sn_k(\xi_t -s)}{sn_k(2\xi_t)} e_i. $$
  Hence \[
  J'(s) = -\frac{\langle \nabla\omega(X'_t), e_n\rangle}{2\xi_t}  e_n   - \sum_{i=1}^{n-1} \langle \nabla\omega(X'_t), e_i \rangle \frac{cn_k(\xi_t -s)}{sn_k(2\xi_t)} e_i,
  \]
  $$\nabla_{\tfrac{\partial}{\partial r}} T(r, s)|_{r=0} = - \sum_{i=1}^{n-1} \langle \nabla\omega(X'_t), e_i \rangle \frac{cn_k(\xi_t -s)}{sn_k(2\xi_t)} e_i.$$
Plug these in, we have 
\begin{eqnarray*}
	\lefteqn{\nabla\log \phi_1(Y'_t)(F_t)} \\
	& = &  \sum_{i=1}^{n-1}\left( \langle \nabla\omega(Y'_t), e_i \rangle \langle  \nabla\omega(X_t' ), e_i \rangle \frac{-1}{sn_k(2\xi_t)}    + \langle \nabla\omega(Y'_t), e_i \rangle^2  \frac{cn_k(2\xi_t)}{sn_k(2\xi_t)}  \right) \\
	& & +\langle \nabla_{\nabla \omega} \nabla\omega(r,s), \gamma'(Y_t')  \rangle|_{r=0, s=\xi_t}.
\end{eqnarray*}
                    Similarly 
\begin{eqnarray*}
	\lefteqn{\nabla\log \phi_1(Y'_t)(F_t)} \\
	& = &  \sum_{i=1}^{n-1}\left( \langle \nabla\omega(Y'_t), e_i \rangle \langle  \nabla\omega(X_t' ), e_i \rangle \frac{-1}{\mathrm{sn}_k(2\xi_t)}    + \langle \nabla\omega(Y'_t), e_i \rangle^2  \frac{\mathrm{cn}_k(2\xi_t)}{\mathrm{sn}_k(2\xi_t)}  \right) \\
	& & +\langle \nabla_{\nabla \omega} \nabla\omega(r,s), \gamma'(Y_t')  \rangle|_{r=0, s=\xi_t}.
\end{eqnarray*}
  
                   Together with the fact that $\nabla_{\nabla \omega} \nabla\omega=\frac{1}{2}\nabla\Vert \nabla\omega \Vert^2$, we get
				\begin{align}
					&2\nabla\log \phi_1(X'_t)(F_t)+2\nabla\log \phi_1(Y'_t)(F_t)\nonumber\\
					&=\langle \nabla \Vert\nabla \omega(Y'_t) \Vert^2 , \gamma'(Y'_t) \rangle-\langle \nabla \Vert\nabla \omega(X'_t) \Vert^2 , \gamma'(X'_t) \rangle\nonumber\\
					&+2\sum_{i=1}^{n-1}\langle \nabla\omega(Y'_t), e_i \rangle\left(\frac{cs_k(2\xi_t)}{sn_k(2\xi_t)}\langle  \nabla\omega(Y'_t), e_i  \rangle -\frac{1}{sn_k(2\xi_t)}\langle \nabla\omega(X'_s), e_i  \rangle  \right)\nonumber\\
					&-2\sum_{i=1}^{n-1}\langle \nabla\omega(X'_t), e_i \rangle\left(\frac{1}{sn_k(2\xi_t)}\langle  \nabla\omega(Y'_t), e_i  \rangle -\frac{cs_k(2\xi_t)}{sn_k(2\xi_t)}\langle \nabla\omega(X'_t), e_i  \rangle  \right).\label{1st order part 1}
				\end{align}
				For the other first order term, we have
				\begin{align}
					&2tn_k\left(\xi_t\right)\nabla_{ e_n \oplus 0}F_t-2tn_k\left(\xi_t\right)\nabla_{ 0 \oplus e_n}F_t\nonumber\\
					&=-2tn_k\left(\xi_t\right)\langle \nabla_{e_n}\nabla \omega(Y'_t), e_n \rangle-2tn_k\left(\xi_t\right)\langle \nabla_{e_n}\nabla \omega(X'_t), e_n \rangle.\label{1st order part 2}
				\end{align}
				For the second order derivative, we argue in the exact same way as equation (3.18) of \cite{MR3960269} and get
				\begin{align}
					&\sum_{i=1}^{n}\nabla_{E_i}\nabla_{E_i}F_t(X'_t, Y'_t)=\langle \Delta\nabla\omega(Y'_t), e_n \rangle-\langle \Delta\nabla\omega(X'_t), e_n \rangle \nonumber\\
					-&2tn_k(\xi_t)\sum_{i=1}^{n-1}\left(\langle \nabla_{e_i}\nabla\omega(Y'_s), e_i \rangle+\langle \nabla_{e_i}\nabla\omega(X'_t), e_i \rangle   \right)\nonumber\\
					-&(n-1)tn^2_k(\xi_t)\left(\langle \nabla\omega(Y'_t), e_n \rangle-\langle \nabla\omega(X'_t), e_n \rangle   \right)\nonumber\\
					=&\langle \Delta\nabla\omega(Y'_t), e_n \rangle-\langle \Delta\nabla\omega(X'_t), e_n \rangle \nonumber\\
					-&2tn_k(\xi_t)\left(\Delta\omega(Y'_t)+\Delta\omega(X'_t) \right)+2tn_k\left(\xi_t\right)\langle \nabla_{e_n}\nabla \omega(Y'_t), e_n \rangle+2tn_k\left(\xi_t\right)\langle \nabla_{e_n}\nabla \omega(X'_t), e_n \rangle\nonumber\\
					-&(n-1)tn^2_k(\xi_t)\left(\langle \nabla\omega(Y'_t), e_n \rangle-\langle \nabla\omega(X'_t), e_n \rangle   \right).\nonumber\\
					=&\langle \Delta\nabla\omega(Y'_t), e_n \rangle-\langle \Delta\nabla\omega(X'_t), e_n \rangle \nonumber\\
					+&2tn_k(\xi_t)\left(2\lambda_1+\Vert \nabla\omega(Y'_t) \Vert^2+\Vert \nabla\omega(X'_t) \Vert^2-V(X_t')-V(Y_t')\right)\nonumber\\
					+&2tn_k\left(\xi_t\right)\langle \nabla_{e_n}\nabla \omega(Y'_t), e_n \rangle+2tn_k\left(\xi_t\right)\langle \nabla_{e_n}\nabla \omega(X'_t), e_n \rangle\nonumber\\
					-&(n-1)tn^2_k(\xi_t)\left(\langle \nabla\omega(Y'_t), e_n \rangle-\langle \nabla\omega(X'_t), e_n \rangle   \right).\label{1st order part 3}
				\end{align} 
				Here, we used $\Delta\omega=-\lambda_1-\Vert\nabla\omega\Vert^2+V$ in the last equation. Finally, note that $\nabla\Vert \nabla\omega \Vert^2=-\nabla\Delta \omega+\nabla V$. Then, equation \eqref{dF for log-concavity} follows from adding up \eqref{1st order part 1} \eqref{1st order part 2} \eqref{1st order part 3}, the Bochner–Weitzenb\"ock formula 
				\[ \Delta\nabla\omega-\nabla\Delta\omega=Ric(\nabla\omega,\cdot)=(n-1)k\langle \nabla\omega, \cdot\rangle,  \]
				and the identity $tn_k(\xi_t)=\frac{1-cs_k(2\xi_t)}{sn_k(2\xi_t)}$.
			\end{proof}
			Now we are ready to prove Theorem~\ref{log-concavity}. 
			\begin{proof}[Proof of Theorem~\ref{log-concavity}]
            
				First, we pick $0<D<D'<\pi/\sqrt{k}$ and continuously extend $\tilde{V}$ to an even function on $[-D'/2,D'/2]$. Let $\bar{\phi}_1$ be the first eigenfunction of the 1-dimensional model on $[-D'/2,D'/2]$. Moreover, let $\{Q_i\}_{1\leq i\leq n} $ be the Jacobi fields of the geodesic variations $\eta(r,s)$ associated with the vector fields $\{E_i\}_{1\leq i\leq n}$, which we introduced at the beginning of this section. For simplicity, we denote $\Psi=(\log\bar{\phi}_1)'$. 
				
				By It\^o's formula \eqref{Ito's formula} 
            and the second variation formula, 
				\begin{align}
					d\Psi(\xi_t)&=d\{\text{martingale}\}\nonumber\\
					&+\{\frac{1}{2}\sum_{i=1}^{n}I(Q_i,Q_i)\Psi'+F_t\Psi'+\Psi'' -2tn_k\left(\xi_t\right)\Psi'\}dt.\label{dPsi for log-concavity}
				\end{align}
				For $\mathbb M^n_k$, we have
				\begin{equation}
					\label{eq2 in the proof}
					\frac{1}{2}\sum_{i=1}^{n}I(Q_i,Q_i) = -(n-1)tn_k(\xi_t).
				\end{equation}
				On the other hand, a direct computation (similar to Lemma 2.7 of \cite{MR3960269}) gives that
				\begin{equation}
					\label{ODE for Psi}
					\Psi''+2\Psi'\Psi-tn_k\left((n+1)\Psi'+2\bar{\lambda}_1+2\Psi^2-2\Tilde{V} \right)-\Tilde{V}'-(n-1)(K-tn_k^2)\Psi=0.
				\end{equation}
				Combining estimates \eqref{ODE for Psi}\eqref{eq2 in the proof} with \eqref{dF for log-concavity} \eqref{dPsi for log-concavity} gives 
				\begin{align}
					d\left(F_t-2\Psi(\xi_t)\right)&= d\{\text{martingale}\} + 4 tn_k\,  (\lambda_1 - \bar{\lambda}_1) \nonumber\\
					& +\left( (n-1)(K-tn_k^2)-2\Psi'\right)(F_t-2\Psi)-4tn_k(\xi_t)\Psi^2 dt\nonumber\\
					& +\left\{ \langle \nabla V(Y'_t), e_n\rangle-\langle \nabla V(X'_t), e_n\rangle-2\tilde{V}'(\xi_t)\right\}+2 tn_k(\xi_t)(2\tilde{V}(\xi_t)-V(X'_t)-V(Y'_t))dt\nonumber\\
					&+2tn_k(\xi_t)\left(\langle  \nabla\omega(Y'_t), e_n  \rangle^2+\langle \nabla\omega(X'_t), e_n  \rangle^2 \right)dt\nonumber\\
					&+\frac{2}{sn_k(2\xi_t)}\sum_{i=1}^{n-1}\left(\langle  \nabla\omega(Y'_t), e_i  \rangle-\langle \nabla\omega(X'_t), e_i  \rangle \right)^2dt\nonumber\\
					&\geq d\{\text{martingale}\}+ \left( (n-1)(K-tn_k^2(\xi_t))-2\Psi'\right)(F_t-2\Psi)dt\nonumber\\
					&+tn_k(\xi_t)(F_t+2\Psi)(F_t-2\Psi)dt\nonumber\\
					&=d\{\text{martingale}\}\nonumber\\
					&+\left( (n-1)(K-tn_k^2(\xi_t))-2\Psi'+tn_k(\xi_t)(F_t+2\Psi)\right)(F_t-2\Psi)dt. \label{log-concavity inequality differential form}
				\end{align}
			Here for the inequality we used the assumptions that $\tilde{V}$ is a modulus of convexity of $V$ \eqref{eq:mod-of-convexity}, 
   \eqref{eqn:condition om lambda and V}, $k \ge 0$ (this is the only place $k\ge 0$ is used),
    and \[
    \langle  \nabla\omega(Y'_t), e_n  \rangle^2+\langle \nabla\omega(X'_t), e_n  \rangle^2 \ge \frac{\left(\langle  \nabla\omega(Y'_t), e_n  \rangle - \langle \nabla\omega(X'_t), e_n  \rangle \right)^2 }{2} = \frac{F_t^2}{2}. 
                \]  
                Next, we observe that \eqref{log-concavity inequality differential form} is equivalent to 
				\begin{align*}
					&d\left( e^{\int_{0}^{t}2\Psi'-(n-1)(K-tn_k^2(\xi_s))-tn_k(\xi_s)(F_s+2\Psi)ds}(F_t-2\Psi(\xi_t))  \right)\\
					&\geq e^{\int_{0}^{t}2\Psi'-(n-1)(K-tn_k^2(\xi_s))-tn_k(\xi_s)(F_s+2\Psi)ds}d\{\text{martingale}\}.
				\end{align*}
                 Let $N>0$, integrating from $0$ to $\tau'\wedge N:= \min\{\tau', N \}$ gives 
                \begin{equation*}
					\left( e^{\int_{0}^{\tau'\wedge N}2\Psi'-(n-1)(K-tn_k^2(\xi_s))-tn_k(\xi_s)(F_s+2\Psi)ds}(F_{\tau'\wedge N}-2\Psi(\xi_{\tau'\wedge N})  \right)\geq \left(F_0-2\Psi(\xi_0)\right)+\text{Martingale}_{\tau'\wedge N}.
				\end{equation*}
                Since $\tau' \wedge N$ is bounded, the stopped martingale $\text{Martingale}_{\tau'\wedge N}$ in the above inequality is another martingale. We can thus take expectation and get
                \begin{align*}
					F_0-2\Psi(\xi_0)&\leq \mathbb{E}\left( e^{\int_{0}^{\tau'\wedge N}2\Psi'-(n-1)(K-tn_k^2(\xi_s))-tn_k(\xi_s)(F_s+2\Psi)ds}(F_{\tau'\wedge N}-2\Psi(\xi_{\tau'\wedge N})  \right)\\
                                                &\leq \mathbb{E}\left( e^{\int_{0}^{\tau'\wedge N}2\Psi'-(n-1)(K-tn_k^2(\xi_s))-tn_k(\xi_s)(F_s+2\Psi)ds}\abs{F_{\tau'\wedge N}-2\Psi(\xi_{\tau'\wedge N})} \right).
				\end{align*}
                By Fatou,
                \begin{equation*}
                    F_0-2\Psi(\xi_0)\leq \mathbb{E}\left( \liminf_{N\rightarrow \infty} e^{\int_{0}^{\tau'\wedge N}2\Psi'-(n-1)(K-tn_k^2(\xi_s))-tn_k(\xi_s)(F_s+2\Psi)ds}\abs{F_{\tau'\wedge N}-2\Psi(\xi_{\tau'\wedge N})} \right).
                \end{equation*}
                Since $\tau'<\infty$ almost surely by Lemma \ref{lem:successful coupling for modified diffusion}, we have $\tau' \wedge N \rightarrow \tau'$ as $N$ approaches infinity. On the other hand, $F_{\tau'}=\Psi(\xi_{\tau'})=0$. Thus,
                \begin{align*}
                    &\mathbb{E}\left( \liminf_{N\rightarrow \infty} e^{\int_{0}^{\tau'\wedge N}2\Psi'-(n-1)(K-tn_k^2(\xi_s))-tn_k(\xi_s)(F_s+2\Psi)ds}\abs{F_{\tau'\wedge N}-2\Psi(\xi_{\tau'\wedge N})} \right)\\
                    &=\mathbb{E}\left(  e^{\int_{0}^{\tau'}2\Psi'-(n-1)(K-tn_k^2(\xi_s))-tn_k(\xi_s)(F_s+2\Psi)ds}\abs{F_{\tau'}-2\Psi(\xi_{\tau'})} \right)=0.
                \end{align*}
                We conclude that
				\begin{equation*}
					F_0-2\Psi(\xi_0)=\langle \nabla\log \phi_1(y), \gamma'(y)  \rangle -\langle \nabla\log \phi_1(x), \gamma'(x)  \rangle-2\Psi(\rho(x,y)/2)\leq 0.  
				\end{equation*}
				Finally, we send $D'\rightarrow D$. The proof is complete.
			\end{proof}
                \begin{rema}
                    Our proof should be compared with that of Theorem 3.2 of \cite{MR3489850}. In particular, we achieved two important improvements besides the handling of all the curvature terms. First, we did not assume the potential $V$ is convex. Our proof with optional stopping technique does not require $\Psi'$ to be non-positive (see remark 3.3 of \cite{MR3489850}). Second, our argument does not require the first eigenfunction $\phi_1$ to be log-concave, which was crucial in the proof in \cite{MR3489850}.
                \end{rema}
                \begin{rema}
                    It is worth mentioning that at the critical point where $F_t=2\Psi(\xi_t)$, the quantity $2\Psi'-(n-1)(K-tn_k^2(\xi_t))-tn_k(\xi_t)(F_t+2\Psi)$, which appears in \eqref{log-concavity inequality differential form} and later as the integrand in the exponential function becomes $2\Psi'-4tn_k(\xi_t)\Psi-(n-1)(K-tn_k^2(\xi_t))$. The latter appeared in Theorem 3.8 of \cite{MR3960269} (see also Remark 3.9 in the same reference) and was used in the proof of Theorem 1.6 of \cite{MR3960269}. The diameter restriction $D<\pi/(2\sqrt{k})$ was coming from the need to make this quantity non-positive. In our proof above, this sign restriction, and hence the diameter restriction, was removed thanks to the optional stopping technique. 
                \end{rema}

			\section{Fundamental gap comparison}
			In this section, we prove Theorem \ref{thm:comparison}. Let $\phi_1,\ \phi_2 $ be the eigenfunctions associated with the first two eigenvalues of  the Schr\"odinger operator $-\Delta+V$ on $\Omega$ respectively. Recall that the ground state transform
			\begin{equation}
				\label{Ground state transform}
				v_t=\frac{e^{-\lambda_2 t}\phi_2}{e^{-\lambda_1 t}\phi_1}=e^{-(\lambda_2-\lambda_1)t}\frac{\phi_2}{\phi_1}
			\end{equation}
			is a smooth solution to 
			\begin{align}
				\partial_tv&=Lv=\Delta v+2\langle \nabla\log\phi_1, \nabla v \rangle\label{Ground state equation} \\
				v(0,\cdot)&=\phi_2/\phi_1\nonumber
			\end{align}
			on $\mathbb{R}^+\times \bar{\Omega}$. 
   We will use the coupling 
   $X_t, Y_t,$ from \eqref{couping1}-\eqref{coupling2}.
			\begin{proof}[Proof of Theorem \ref{thm:comparison}]
				 Let $\rho_t=\rho(X_t,Y_t)$ and $\xi_t = \rho_t/2$. By It\^o's formula,
				\begin{equation}
					\label{SDE for rho}
					d\rho_t=2\sqrt{2}d\beta_t+\left(\sum_{i=1}^{n}I(Q_i,Q_i)+2\langle \nabla\log\phi_1(Y_t), \gamma'(Y_t)  \rangle -2\langle \nabla\log\phi_1(X_t), \gamma'(X_t)  \rangle \right)dt,
				\end{equation}
				where $\beta_t$ is a Brownian motion independent from $B_t$. Let $\Tilde{\phi}_1, \Tilde{\phi}_2$ be the first two eigenfunctions of the corresponding one-dimensional model. Define $\Phi=\Tilde{\phi}_2/\Tilde{\phi}_1$ and let
                \[   F_t=\langle \nabla\log\phi_1(Y_t), \gamma'(Y_t)  \rangle -\langle \nabla\log\phi_1(X_t), \gamma'(X_t)  \rangle. \]
                With \eqref{SDE for rho} at hand, we apply It\^o's formula again and get 
				\begin{align*}
					d\Phi(\xi_t)=\sqrt{2}\Phi'(\xi_t)d\beta_t+\left(\frac{1}{2}\Phi'(\xi_t)\sum_{i=1}^{n}I(Q_i,Q_i)+\Phi'(\xi_t)F_t+\Phi''(\xi_t) \right)dt.
				\end{align*}
				By our assumption, $\log\Tilde{\phi}_1$ is a modulus of concavity of $\log\phi_1$. Thus, we have $F_t\leq 2(\log\tilde{\phi}_1)'(\xi_t)$. Moreover, by Index lemma
				\[ \frac{1}{2}\sum_{i=1}^{n}I(Q_i,Q_i)\leq -(n-1)tn_k(\xi_t).  \]
				As a result,
				\begin{align}
					\label{Phi without index form}
					d\Phi(\xi_t)\leq\sqrt{2}\Phi'(\xi_t)d\beta_t+\left(-\Phi'(\xi_t)(n-1)tn_k(\xi_t)+\Phi'(\xi_t)2(\log\tilde{\phi}_1)'(\xi_t)+\Phi''(\xi_t) \right)dt.
				\end{align}
				It is easily checked that equation (2.7) of \cite{MR3960269} still holds for the Schr\"odinger operator $-\Delta+V$. Hence, we have
				\begin{equation}
					\label{ODE for Phi}
					\Phi''(s)-\Phi'(s)(n-1)tn_k(s)+\Phi'(s)2(\log\tilde{\phi}_1)'(s)=-(\bar{\lambda}_2-\bar{\lambda}_1)\Phi(s).
				\end{equation} 
				Combining \eqref{Phi without index form} and \eqref{ODE for Phi} gives
				\begin{equation*}
					d\Phi(\xi_t)\leq\sqrt{2}\Phi'(\xi_t)d\beta_t-(\bar{\lambda}_2-\bar{\lambda}_1)\Phi(\xi_t)dt,
				\end{equation*}
				which is equivalent to 
				\begin{equation*}
					d\left(e^{(\bar{\lambda}_2-\bar{\lambda}_1)t}\Phi(\xi_t) \right)\leq\sqrt{2}e^{(\bar{\lambda}_2-\bar{\lambda}_1)t}\Phi'(\xi_t)d\beta_t.
				\end{equation*}
				Integrating and taking expectation gives
				\begin{equation*}
					\mathbb{E}\left(e^{(\bar{\lambda}_2-\bar{\lambda}_1)t}\Phi(\xi_t) \right)\leq \Phi(\xi_0),
				\end{equation*}
				or equivalently
				\begin{equation}
					\label{Bound on Phi(xi_t)}
					\mathbb{E}\Phi(\xi_t)\leq e^{-(\bar{\lambda}_2-\bar{\lambda}_1)t}\Phi(\xi_0).
				\end{equation}
                By Lemma \ref{lem:process}, we know that the above inequality holds for all $t\geq 0$. Finally, by the definition \eqref{Ground state transform} and It\^o's formula, we have $ v_t(x)=\mathbb{E}(v_0(X_t))$. Since $v_0$ is Lipschitz on $\bar{\Omega}$, we can find $K>0$ such that
				\begin{align}
					\abs{v_t(x)-v_t(y)}&=\abs{\mathbb{E}(v_0(X_t)-v_0(Y_t))}\leq\mathbb{E}\abs{v_0(X_t)-v_0(Y_t)}\nonumber\\
					&\leq K \mathbb{E}\rho(X_t,Y_t)=2K\mathbb{E}\xi_t. \label{Lipschitz estimate}
				\end{align}
				Since $\Tilde{V}$ is even and \eqref{ODE for Phi} is the same as equation (2.7) of \cite{MR3960269}, Lemma 2.3 of \cite{MR3960269} applies to $\Phi$. Thus, $\Phi$ is increasing on $[0,D/2]$ with $\Phi'(0)>0$; from which we deduce that there exists $c_1>0$, such that $\Phi(s)\geq c_1 s$ on $[0,D/2]$. Hence, we have from \eqref{Bound on Phi(xi_t)}, that
				\[c_1\mathbb{E}\xi_t\leq  \mathbb{E}(\Phi(\xi_t))\leq e^{-(\tilde{\lambda}_2-\tilde{\lambda}_1)t}\Phi(\xi_0).  \] 
				Putting it together with \eqref{Lipschitz estimate} and the definition of $v_t$ gives
				\[ e^{-(\lambda_2-\lambda_1)t}\abs{v_0(x)-v_0(y)}=\abs{v_t(x)-v_t(y)}\leq \frac{2K}{c_1}e^{-(\bar{\lambda}_2-\bar{\lambda}_1)t}\Phi(\xi_0).    \]
				Since $v_0$ is not constant function, we can find $x\neq y$ such that $\abs{v_0(x)-v_0(y)}>0$. Our result follows by sending $t\rightarrow \infty$.
			\end{proof}
   \begin{rema}
This proof is in the flavor of parabolic proof of Theorem 4.1 in \cite{MR3960269}, but simplifies as we do not need to  use the preservation of  modulus of continuity, Theorem 4.6 in  \cite{MR3960269}.  
   \end{rema}

			\bibliographystyle{plain}
			\bibliography{reference}
			
		\end{document}